\documentclass{aims}
\usepackage{amsmath}
\usepackage{paralist}
\usepackage{amssymb}  
\usepackage{epic}
\usepackage{times}

  \usepackage{graphics} 
  \usepackage{epsfig} 
 \usepackage[colorlinks=true]{hyperref}
\hypersetup{urlcolor=blue, citecolor=red}

  \textheight=8.2 true in
   \textwidth=5.0 true in
    \topmargin 30pt
     \setcounter{page}{1}


\newtheorem{theorem}{Theorem}[section]

\newtheorem{lemma}[theorem]{Lemma}

\theoremstyle{definition}

\newtheorem{remark}{Remark}

\def\wh{\widehat}

\def\fS{{\mathfrak S}}
\def\<{\leqslant}           
\def\>{\geqslant}           
\def\d{\partial}

\def\x{\times}

\def\mZ{{\mathbb Z}}

\def\var{{\bf var}}
\def\bE{{\bf E}}
\def\bM{{\bf M}}

\def\bm{{\bf m}}

\def\bone{{\bf 1}}

\def\re{{\rm e}}

\def\bI{{\bf I}}

\def\mH{{\mathbb H}}

\def\cP{{\mathcal P}}

\def\sP{{\sf P}}

\def\cov{{\bf cov}}

\def\bra{{\langle}}
\def\ket{{\rangle}}
\def\Bra{\left\langle}
\def\Ket{\right\rangle}

\def\cW{{\mathcal W}}
\def\cN{{\mathcal N}}

\def\mR{{\mathbb R}}

\def\mZ{{\mathbb Z}}

\def\rd{{\rm d}}

\def\phi{\varphi}

\def\rT{{\rm T}}



\def\sn{{| \! | \! |}}
\def\[[{{[ \! [}}
\def\]]{{] \! ]}}

\def\[[[{{[ \! [ \! [}}
\def\]]]{{] \! ] \! ]}}



\title[Moment Lyapunov exponents of Gaussian random fields]
      {The monomer-dimer problem and moment Lyapunov exponents of homogeneous
Gaussian random fields}

\author[Igor G. Vladimirov]{}
\subjclass{
60G60,  
60C05,  
37D35,  
82B,    
82C22.   
}
 \keywords{Monomer-dimer problem, partition function, Gaussian random
field, product moments, moment Lyapunov exponent, Pickard random field, pantograph equation}
 \email{igor.g.vladimirov@gmail.com}
\begin{document}
\maketitle
\centerline{\scshape Igor G. Vladimirov}
\medskip
{\footnotesize
 \centerline{University of New South Wales}
   \centerline{Canberra, ACT 2600, Australia}
} 

\bigskip

 \centerline{Dedicated to the memory of Alexei Pokrovskii}

\begin{abstract}
We consider an ``elastic'' version of the statistical mechanical mo\-no\-mer-dimer problem on the $n$-dimensional integer lattice. Our setting includes the classical ``rigid'' formulation as a special case and extends it by allowing each dimer to consist of particles at arbitrarily distant  sites of the lattice, with the energy of interaction between the particles in a dimer depending on their relative position. We reduce the free energy of the elastic dimer-monomer  (EDM) system per lattice site in the thermodynamic limit to the moment Lyapunov exponent (MLE) of a homogeneous Gaussian random field (GRF) whose mean value and covariance function are the Boltzmann factors associated with the monomer energy and dimer potential. In particular, the classical monomer-dimer problem becomes related to the MLE of a moving average GRF. We outline an approach to recursive computation of the partition function for ``Manhattan'' EDM systems where the dimer potential is a weighted  $\ell_1$-distance and the auxiliary  GRF is a Markov random field of Pickard type which behaves  in space like autoregressive processes do in time. For one-dimensional Manhattan EDM systems, we compute  the MLE of the resulting Gaussian Markov chain as the largest eigenvalue of a compact transfer operator on a Hilbert space which is related to the annihilation and creation operators of the quantum harmonic oscillator and also recast it as the  eigenvalue problem for a pantograph functional-differential equation.
\end{abstract}

\section{Introduction}

It is not uncommon that deterministic problems may reveal connections with stochastic counterparts, and it is particularly so if the original formulation inherently involves a pseudo-randomness production mechanism or rich combinatorics.
%
When this happens, probability theoretic methods can provide an additional insight into the deterministic setting.
Such are, for example, the problems concerned with chaotic dynamical systems under increasingly refined spatial discretization which models the finite computer arithmetic.
 Here, the effect of the round-off errors, magnified in the long run by positive Lyapunov  exponents, leads to a subtle interplay  between the ergodic behaviour  of the underlying continuous system and the computer discretization. This can distort the invariant measure to the extent that, in place of its original absolute continuity,  a persistent attracting centre emerges, thus destroying resemblance between  the long-term behaviour of the discretised system and its continuous predecessor.
As reported  in a series of papers by A.V.Pokrovskii, V.S.Kozyakin and their collaborators (see, for example,  \cite{PKM_2000} and references therein), the attracting centre can be modelled through a  finite Markov chain with an absorbing state. Moreover, a strikingly accurate numerical reproduction of statistics of discretised systems has been achieved with the subsidiary Markov chains obtained  by composing random maps with a common fixed point on the finite state space,  sampled independently according to a specially designed probability measure.\footnote{Note in passing that the randomizing effect of the spatial discretization was clarified in rigorous terms by the author of the present paper in \cite{V_1997} by  showing that the round-off errors are independent and uniformly distributed (that is, form a non-Gaussian white noise-like sequence) with respect to a natural finitely additive probability measure (a frequency functional) on an algebra of quasiperiodic sets,  provided the original dynamical system is nonresonant;  see also \cite{KKPV_1997,VKD_2000}.}
The idea of random  composite maps on  a countable set can be extended to the continuum, for example,  by considering the compositions $\phi_1\circ \ldots \circ \phi_k$  of jointly Gaussian real-valued random processes $\phi_k(x)$ of a real parameter $x$. If these processes have differentiable sample paths and share a nonrandom fixed point $x_*$,   then its stability under the maps $\phi_k$, as a random dynamical system, is determined (in the linear approximation) by the asymptotic behaviour of the products
\begin{equation}
\label{psi}
    \psi_N
    :=
    \prod_{k=1}^{N}
    \xi_k
\end{equation}
of jointly Gaussian random variables $\xi_k:=\phi_k'(x_*)$ as $N\to +\infty$. Assuming that the processes $\phi_k$ are all identically distributed, the standard ergodicity argument under a suitable mixing condition  (on the decay of correlations between the processes $\phi_k$ with distant values of $k$) leads to the Lyapunov exponent
\begin{equation}
\label{LE}
    \lim_{N\to +\infty}
    \frac{\ln |\psi_N|}{N}
    =
    \bE \ln |\xi_1|.
\end{equation}
Here, the expectation $\bE(\cdot)$ involves only  the common probability law of the processes  $\phi_k$ as if the latter were independent copies of a ``mother'' process, in which case (\ref{LE}) would follow immediately from the strong law of large numbers \cite{Sh_1995}. Similar products of appropriately rescaled random processes have attracted an active research interest due to their applicability to multifractal modelling of turbulence cascades and other complex phenomena; see, for example, \cite{ALS_2008}.
The problem of investigating the asymptotic behaviour of the products $\psi_N$ from (\ref{psi}) becomes considerably harder if it is concerned with the \emph{moment Lyapunov exponent} (MLE) which we define here as
\begin{equation}
\label{MLEintro}
    \lim_{N\to +\infty}
    \frac{\ln |\bE \psi_N|}{N},
\end{equation}
provided this limit exists (otherwise, the upper limit or other limit points  can be considered instead).  In contrast to the conventional Lyapunov exponent (\ref{LE}), the MLE (\ref{MLEintro}) is easy to compute \emph{only}  in the case of independent random variables $\xi_k$ in (\ref{psi}). If they are dependent, their joint probability law  nontrivially enters the MLE even if correlations in the Gaussian random sequence $\xi_1, \xi_2, \xi_3, \ldots$ decay fast enough to ensure mixing. Although the problem of computing the MLE as a modification to the usual Lyapunov exponent (\ref{LE}) may seem artificial in the context of random dynamics stability mentioned above, it has a remarkable connection with a statistical mechanical problem of computing the equilibrium free energy of a system of monomers and dimers on a lattice, and this connection is the main theme of the present paper.

In the classical \emph{monomer-dimer} problem \cite{B_1982,E_1984,F_1961,FT_1961,HM_1970,K_1961,L_1967}, which is essentially solved in the planar case and remains unsolved in higher dimensions, the monomers are represented by singletons, and the dimers are two-element subsets of the lattice formed by nearest neighbours, with the energy of a dimer depending on its orientation. In particular, in a monomer-free ``isotropic'' setting on the plane, the partition function counts the number of ``domino'' tilings  of a  given finite subset of the planar lattice \cite[p.~125]{B_1982}. It turns out that, for any dimension $n$, the number of such partitions of a finite subset $\Lambda \subset \mZ^n$ of the $n$-dimensional integer lattice coincides with the product moment
\begin{equation}
\label{bM}
    \bM_{\Lambda}(\xi)
    :=
    \bE \prod_{x\in \Lambda}
    \xi_x
\end{equation}
of a specially constructed homogeneous Gaussian random field (GRF) $\xi:=(\xi_x)_{x\in \mZ^n}$. Moreover, this GRF is of moving average type which is easy to simulate. This, in principle, provides an alternative probabilistic way (different from the probabilistic algorithm proposed in \cite{KRS_1996}) for approximate computation of the partition function in the monomer-dimer problem by modelling the auxiliary GRF and statistically estimating its product moments instead of the direct Monte-Carlo simulation of the underlying particle system which is complicated by the tilability issue. Due to this connection, the free energy of the monomer-dimer system per lattice site in the thermodynamic limit is proportional to the MLE of the auxiliary GRF, defined, similarly to (\ref{MLEintro}), by
\begin{equation}
\label{bm}
    \bm(\xi)
    :=
    \lim_{\Lambda \to \infty}
    \frac{\ln |\bM_{\Lambda}(\xi)|}{\#\Lambda},
\end{equation}
where $\#(\cdot)$ is the number of elements in a finite set, and $\Lambda\to \infty$ is understood in the sense of van Hove \cite{R_1978}. This connection between the MLEs of homogeneous GRFs and the statistical mechanical partition functions remains valid for a wider class of monomer-dimer systems.

In the present paper, we consider an ``elastic'' version of the monomer-dimer problem, which extends the original ``rigid'' formulation by allowing each dimer to consist of two particles at arbitrarily distant sites of the lattice, with the energy of interaction between the particles depending on their relative position. Thus, the geometric constraint that the dimers are formed only by nearest neighbours is removed, thereby introducing the long-range interactions; see Fig.~\ref{fig:edm_confs}.
\begin{figure}[htbp]
    \vskip-2cm
\centering
\includegraphics[width=5cm]{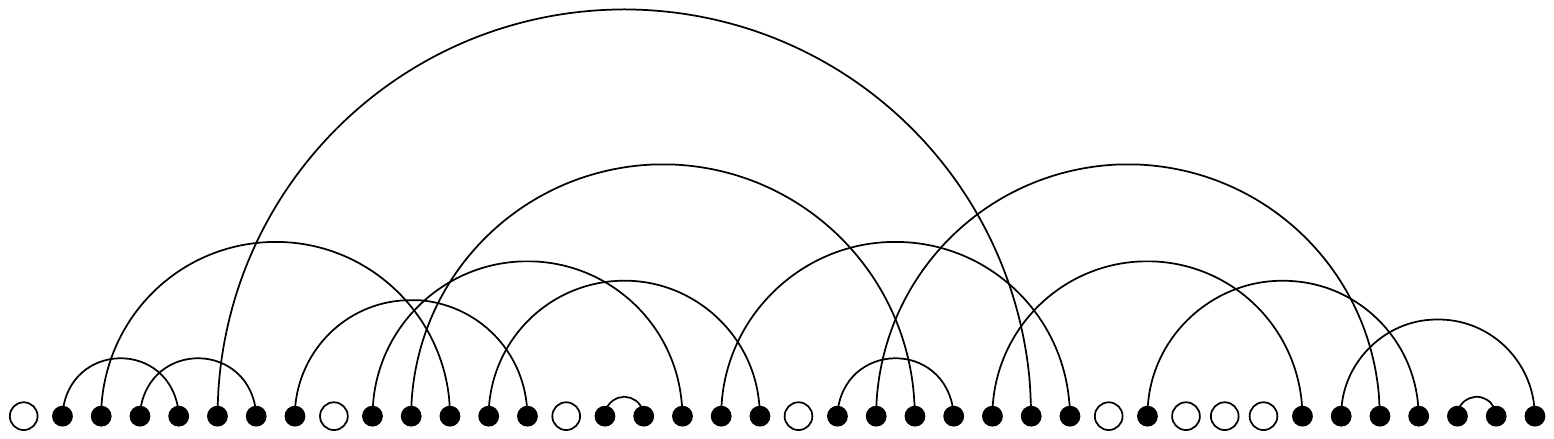}\hskip-1.3cm
\includegraphics[width=5cm]{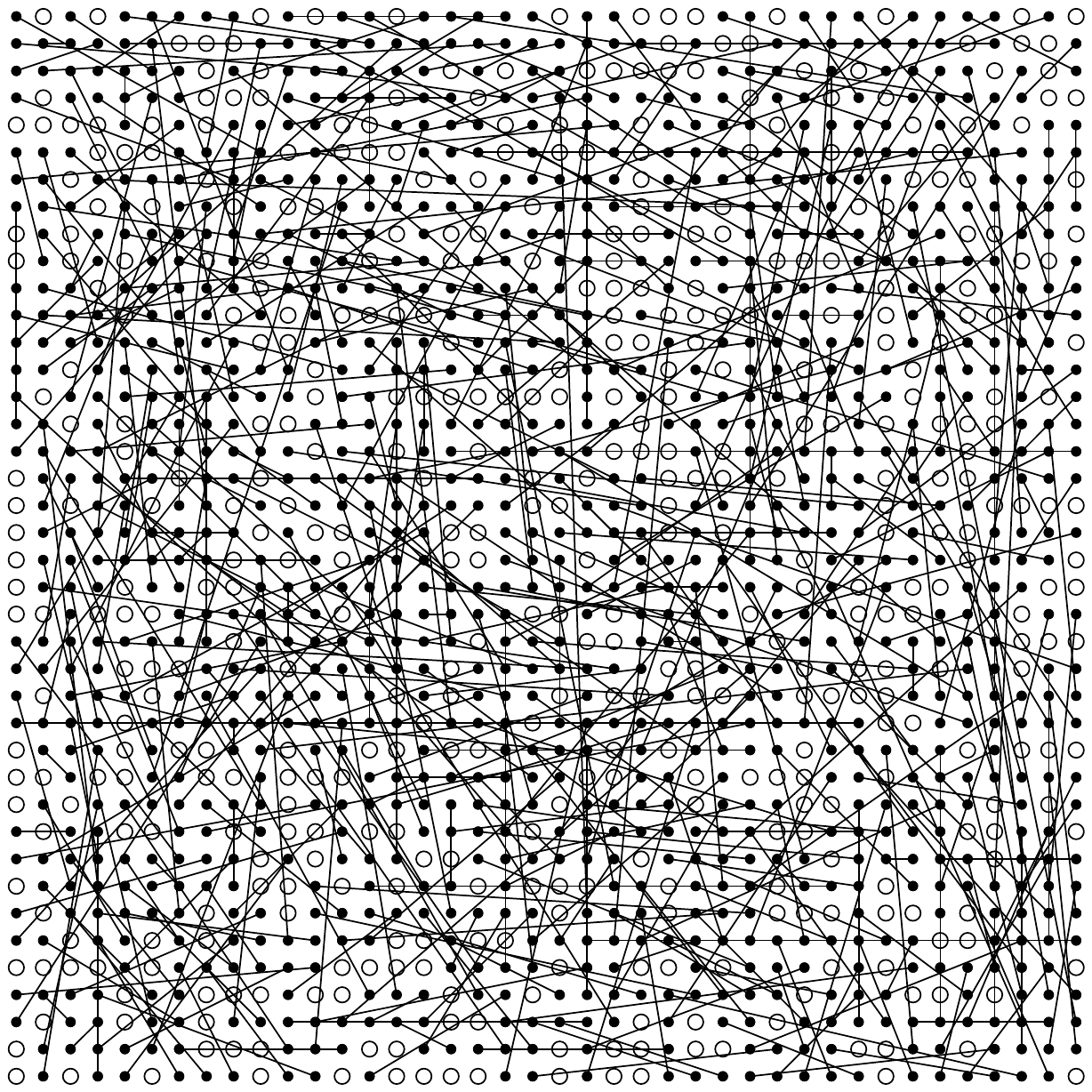}\hskip-2cm
\includegraphics[width=60mm]{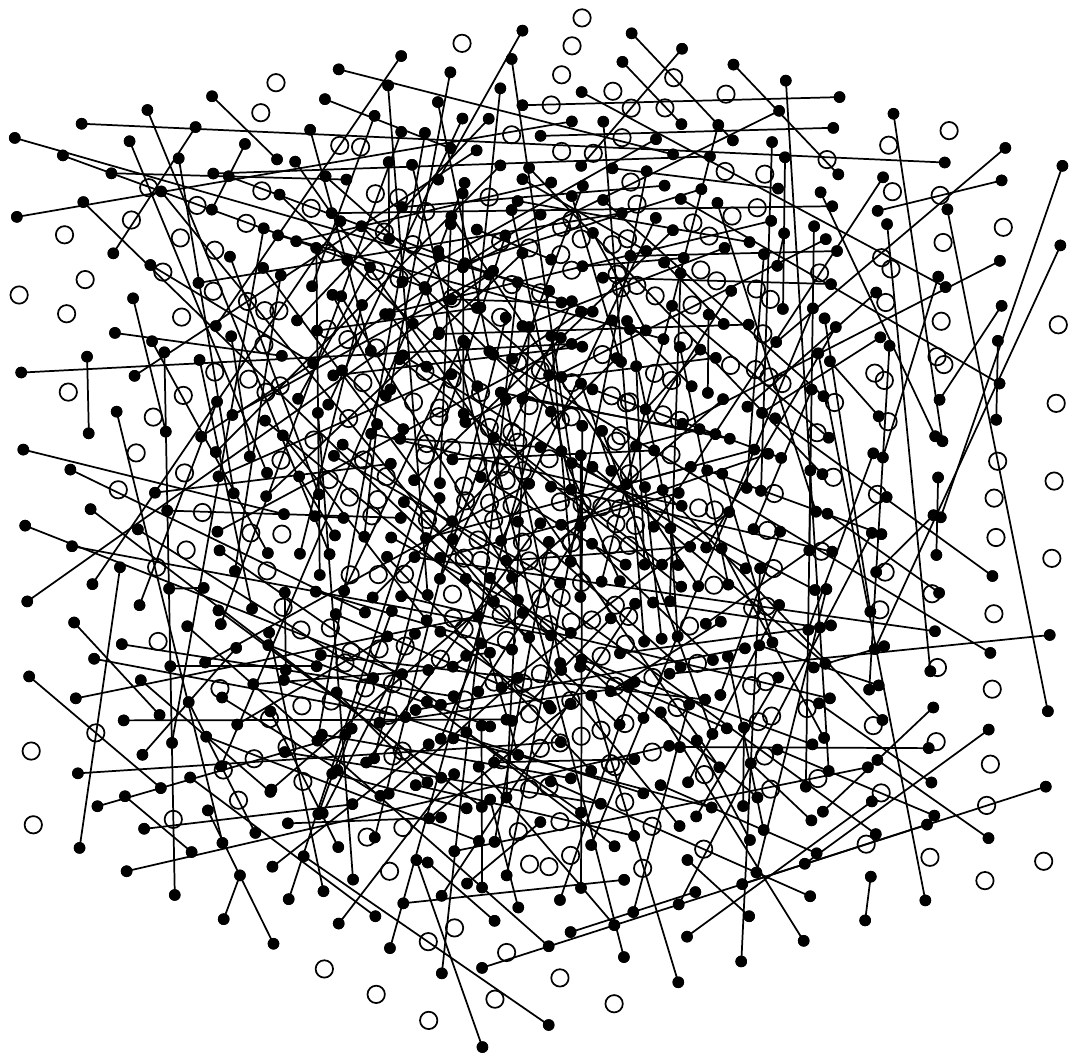}
    \vskip-1cm
  \caption{
    Examples of configurations of the elastic dimer-monomer system in dimensions one, two and three (left to right). Monomers are depicted as ``$\circ$''s. Dimer particles are represented by ``$\bullet$''s, whilst the bonds between them  are shown as arcs in the one-dimensional case and as line segments in the higher dimensions.
    }
    \label{fig:edm_confs}
\end{figure}
In this more general setting, the partition function of the \emph{elastic dimer-monomer} (EDM) system and its thermodynamic limit can be  encoded in the product moments and the MLE of an auxiliary homogeneous GRF whose mean value and the covariance function are the Boltzmann factors associated with the monomer energy and the dimer potential, respectively.
The structure of the covariance function singles out a class of EDM systems where the dimer potential is a weighted $\ell_1$-norm (also known as ``Manhattan norm'') of the dimer vector. A remarkable feature of the  \emph{Manhattan dimer potential} is that the corresponding auxiliary GRF is Markov and, moreover, belongs to the class of Pickard random fields (PRFs) \cite{P_1977,P_1980,TP_1992}; see also, \cite{CIG_1998,CI_2000}. PRFs were developed originally for the planar case and subsequently extended to three dimensions \cite{IG_1999}. Unlike general Markov random fields, they are amenable to a unilateral single-pass simulation, for example,  using the level-sets algorithm of \cite{CD_1998}.  The simplicity of the autoregressive-like modelling allows  the Gaussian PRF (GPRF)  to be utilized for the statistical estimation of the partition function of the Manhattan EDM system, similar to the role of moving average GRFs for the classical monomer-dimer problem. However, in addition to this utility for the alternative Monte-Carlo simulation, we take advantage of the specific Markov structure of the auxiliary GPRF to establish a recurrence relation for its conditional product moments which involves local  transitions from one level set to another. Although the exposition is focused mainly on the two-dimensional case for better visualizability, a recurrence equation for the conditional product moments of the GPRF can also be established in three dimensions. Note that similar recurrence relations are present in the statistical mechanical transfer matrix method for systems with short-range interactions \cite{B_1981,B_1982}. Since the interactions in the Manhattan EDM system are of long-range nature, the possibility of recursive representation of its partition function seems striking.





For the one-dimensional Manhattan EDM system, where the dimer potential is proportional to the standard Euclidean distance, the auxiliary GRF becomes a homogeneous Gaussian Markov sequence satisfying an autoregressive equation. Due to the Markov property, the MLE is reduced to the largest eigenvalue of a compact transfer operator on an invariant cone in a Hilbert space which governs a recurrence equation for the conditional product moments. We obtain a matrix representation of the transfer operator in the basis of Hermite polynomials and provide numerical results on the dependence of the MLE on two parameters of the univariate  Manhattan EDM system. Although this eigenanalysis lies within the general framework of the transfer matrix method, we employ probability theoretic techniques and constructs such as measure change and conditional Gaussian distributions. This allows the transfer operator to be linked with the annihilation and creation operators of the quantum harmonic oscillator \cite[p.~91]{S_1994}, and the eigenvalue problem to be recast into a pantograph functional-differential equation \cite{BDMO_2007,D_1990,KM_1971,M_1940,S_1995} which involves scaling of the independent variable.





Note that the approach to the monomer-dimer problem through the product moments and MLEs of auxiliary GRFs, proposed in the present paper, is different from, and in fact, more general than, the classical methods based on Pfaffians of matrices (see \cite{B_1982} and references therein) and the links of the problem  with the determinants of random matrices \cite{L_2002}. Our machinery differs also from the Gaussian integrals in Grassmann variables and their connections with the Pfaffians \cite{HP_1994}.

The paper is organised as follows. Section~\ref{sec:EDM_system}
describes the class of EDM systems being considered and formulates the problem of computing the equilibrium free energy in the thermodynamic limit. In Section~\ref{sec:connection}, this problem is related to the product moments and the MLE of a homogeneous GRF. Section~\ref{sec:rigid} shows that the auxiliary GRF for the classical rigid dimer problem is of moving average type. Section~\ref{sec:QEDM}  relates the  family of Manhattan EDM systems with GPRFs and focuses on the two-dimensional case. Section~\ref{sec:cond_prod_moms_GPRF} obtains a recurrence relation for their conditional product moments associated with aggregated level sets of Section~\ref{sec:aggregation}. Section~\ref{sec:1d_case} proceeds to one-dimensional Manhattan EDM systems. Section~\ref{sec:cond_prod_moms} derives the transfer operator which governs the recurrence equation for the conditional product moments of the auxiliary Gaussian Markov chain. Section~\ref{sec:upper} establishes an inequality for the norm of this  operator and a related upper bound for the MLE. Section~\ref{sec:matrix} considers the matrix representation of the transfer operator in the basis of Hermite polynomials and discusses its links with the ladder operators of the quantum harmonic oscillator. Section~\ref{sec:pantograph} relates the MLE for the univariate Manhattan EDM system with the eigenvalue  problem for a pantograph functional-differential equation. Section~\ref{sec:conclusion} provides concluding remarks.



\section{Elastic dimer-monomer problem}\label{sec:EDM_system}

Consider a system of identical particles residing at sites of a nonempty finite subset $\Lambda$ of the $n$-dimensional integer lattice $\mZ^n$, with each site of $\Lambda$ being occupied by exactly one particle.
For some disjoint two-element subsets $\{x,y\}\subset \Lambda$, the particles at the lattice sites $x$ and $y$ are (chemically) bonded and form a \emph{dimer}.  The particles which do not participate in dimers are interpreted as \emph{monomers}.

The monomers and dimers are endowed with particular values of energy. The individual energies of monomers are all equal and their common value is denoted by $V$.  The energy $W(z)$ associated with a dimer with endpoints $x\ne y$ is that of the interaction between the particles in it and depends on their relative position $z:=x-y$. Here, $W: \mZ^n\setminus \{0\} \to \mR \bigcup \{+\infty\}$ is a symmetric function which we will refer to as the \emph{dimer potential}.

The classical monomer-dimer formulation \cite{F_1961,FT_1961,K_1961} allows dimers to consist of nearest neighbours only, so that all dimers have a fixed length and the energy of a dimer can only  depend on its orientation.
The  ``elastic'' dimer-monomer (EDM) setting, outlined above, is more general. Indeed,  it still leaves room for geometric constraints which can be imposed by putting $W(z):=+\infty$ for prohibited values of the dimer vector $z$. On the other hand, for admissible $z$, where $W(z)$ is finite, the dimer potential $W$ plays the role of a scoring function which favors dimers with smaller energy, though, in general, it does  not forbid arbitrarily distant particles to form a bond. As in the classical rigid setting, the sites occupied by monomers can also be interpreted as vacancies in which case the EDM system provides a lattice model for a gas of dimers. We will not, however, employ this alternative interpretation in what follows.

A configuration $\omega$ of the EDM system is completely specified by a subset $D$ of evenly many $
    \#D = 2r
$
 sites of $\Lambda$,
and a partition $\{\{x_1, y_1\}, \ldots, \{x_r,y_r\}\}$ of $D$ into two-element subsets which represent  dimers. The class of pair partitions of $D$ is denoted by $\cP_D$, so that $\# \cP_D = (2r-1)!!$. With the complementary set $\Lambda \setminus D$ being occupied by monomers, the total energy of the particle system in the configuration $\omega$ is
\begin{equation}
\label{U}
    U(\omega)
    :=
    (N - 2r) V
    +
    \sum_{k=1}^{r} W(x_k-y_k),
\end{equation}
where
$
    N:= \# \Lambda
$.
The class $\Omega_{\Lambda} := \{\omega\}$ of all possible configurations of the EDM system associated with the set $\Lambda$ consists of
\begin{eqnarray}
\nonumber
    \#\Omega_{\Lambda}
    &=&
    \sum_{r = 0}^{\lfloor N/2\rfloor}
    \begin{pmatrix}
        N\\
        2r
    \end{pmatrix}
    (2r-1)!!
    =
    \bE((1+\zeta)^N)\\
\label{Omegacard}
    &=&
    \d_{\lambda}^N
    \bE \re^{\lambda(1+\zeta)}\big|_{\lambda = 0}
    =
    \d_{\lambda}^N
    \re^{\lambda+\lambda^2/2}\big|_{\lambda = 0}
    =
    i^N H_N(-i)
\end{eqnarray}
partitions of $\Lambda$ into two-element subsets and
singletons. Here,
$\lfloor \cdot \rfloor$ is the floor function, $\zeta$ is a standard normal (that is, zero mean unit variance Gaussian) random variable with the probability density function (PDF)
\begin{equation}
\label{nu}
    \nu(x)
    :=
    \re^{
        -x^2/2}\big/\sqrt{2\pi},
\end{equation}
and
\begin{equation}
\label{H}
    H_k(x)
    :=
    (-1)^k
    \re^{x^2/2}
    \d_x^k
    \re^{-x^2/2}
\end{equation}
is the $k$th modified Hermite polynomial. In (\ref{Omegacard}), use is made of the even order moments $\bE(\zeta^{2r})=(2r-1)!!$ and the moment-generating function $\bE \re^{\lambda \zeta} = \re^{\lambda^2/2}$ for the standard normal distribution in combination with the generating function  of the Hermite polynomials \cite[Theorem 2.5.6 on p.~7]{M_1997}:
\begin{equation}
\label{Hermgen}
    \sum_{k\> 0}
    H_k(x)y^k/k!
    =
    \re^{xy-y^2/2}.
\end{equation}
%
%
%
 By the variational postulate of Statistical
Mechanics describing the canonical ensemble \cite{H_1987}, the particle system,  in contact with a heat bath at absolute temperature $T>0$, acquires an equilibrium probability distribution over  the configurational space $\Omega_{\Lambda}$ that delivers the minimum value
\begin{equation}
\label{F_Lambda}
    F_{\Lambda}
    :=
    \min_P
    \sum_{\omega\in \Omega_{\Lambda}}
    (
        U(\omega)
        +
        T
        \ln P(\omega)
    )
    P(\omega)
    =
    -T\ln Z_{\Lambda}
\end{equation}
to the free energy functional
over all probability mass functions (PMFs) $P:
\Omega_{\Lambda}\to [0,1]$ satisfying $\sum_{\omega \in \Omega_{\Lambda}}
P(\omega) = 1$. By the finiteness of the set $\Omega_{\Lambda}$ and the  strict convexity of the functional being minimized in (\ref{F_Lambda}) over the simplex  of PMFs,  the equilibrium value $F_{\Lambda}$  of the free energy is
achieved at the unique Gibbs-Boltzmann PMF
$
    P_{\Lambda}(\omega)
    =
    \re^{-\beta U(\omega)}/ Z_{\Lambda}$.
Here,
\begin{equation}
\label{beta}
    \beta
    :=
    1/T,
\end{equation}
where the units of the temperature $T$ are chosen so
as to ``absorb'' the Boltzmann constant, and
\begin{equation}
\label{Z}
    Z_{\Lambda}
    :=
    \sum_{\omega\in \Omega_{\Lambda}}
    \re^{
        -\beta U(\omega)
    }
\end{equation}
is the canonical partition function \cite{H_1987,ME_1981} which encodes the
equilibrium properties  of the system. For example, the average total energy of the particle system is recovered from (\ref{Z}) as
$
    \sum_{\omega \in \Omega_{\Lambda}}
    U(\omega)
    P_{\Lambda}(\omega)
    =
    -\d_{\beta} \ln Z_{\Lambda}
$.
We will be concerned with the statistical mechanical problem of computing the partition function $Z_{\Lambda}$ and the thermodynamic limit of
the equilibrium free energy (\ref{F_Lambda}) per site of an
unboundedly increasing fragment $\Lambda$ of the lattice $\mZ^n$:
\begin{equation}
\label{F}
    \lim_{\Lambda\to \infty}
    \frac{F_{\Lambda}}{\#\Lambda}
    =
    -T
    \lim_{\Lambda \to \infty}
    \frac{\ln Z_{\Lambda}}{\# \Lambda}.
\end{equation}
Although the convergence $\Lambda\to \infty$ is usually understood in the sense of van Hove
\cite{R_1978}, we   will often assume, for simplicity, that
$\Lambda$ is a discrete hypercube $\{0, \ldots, N-1\}^n$ (or another simply shaped subset of the lattice), where $N$ is an integer parameter which tends to $+\infty$.

\section{Auxiliary Gaussian random field}\label{sec:connection}

Suppose the temperature $T$ is fixed and the
dimer potential $W$ is extended to the origin so as to satisfy the condition $W(0)\ne \infty$ and the inequality
\begin{equation}
\label{dom}
    \re^{-\beta W(0)}
    \>
    \sum_{z\in \mZ^n\setminus \{0\}}
    \re^{-\beta W(z)}.
\end{equation}
To make this possible, we assume that the sum on the right-hand side of (\ref{dom}) is finite. Then the function
$C: \mZ^n \to \mR_+$, defined as the Boltzmann factor associated with the dimer potential $W$ by
\begin{equation}
\label{C}
    C(z)
    :=
    \re^{-\beta W(z)},
\end{equation}
with the shorthand notation (\ref{beta}),
is positive definite \cite{GS_2004}. That is, for any  positive integer $m$ and any points $z_1, \ldots, z_m$ in $\mZ^n$, the matrix $(C(z_j-z_k))_{1\< j,k\< m}$ is positive semi-definite. The fact that (\ref{dom}) implies the positive definiteness of $C$ in (\ref{C}) is established by a diagonal dominance argument \cite[p.~349]{HJ_2007}.
Hence, by Bochner's theorem,
$C$ is the covariance function of a real-valued  homogeneous random field on $\mZ^n$ which can be chosen to be Gaussian (without additional assumptions on $C$). Therefore,
 the fulfillment of (\ref{dom}) ensures the existence  of a homogeneous Gaussian random field (GRF) $\xi :=
(\xi_x)_{x \in \mZ^n}$ satisfying
\begin{equation}
\label{Ecov}
    \bE \xi_x
    =
    \re^{-\beta V},
    \qquad
    \cov(\xi_x, \xi_y)
    =
    C(x-y)
\end{equation}
for all $x, y \in \mZ^n$. Here,  $\cov(\cdot, \cdot)$ is the covariance of random variables, $V$ is the monomer energy, and $C$ is given by (\ref{C}). In what follows, such a random field
$\xi$ will be referred to as
the \emph{auxiliary GRF}.  Its significance for
the EDM system, described in Section~\ref{sec:EDM_system}, is
clarified by the theorem below.

\begin{theorem}
\label{th:connection}
Suppose the dimer potential $W$ satisfies (\ref{dom}). Then for any finite set $\Lambda\subset \mZ^n$, the partition function (\ref{Z})
coincides with the product moment (\ref{bM}) of the auxiliary GRF
$\xi$ over $\Lambda$:
\begin{equation}
\label{Zxi}
    Z_{\Lambda}
    =
    \bM_{\Lambda}(\xi).
\end{equation}
\end{theorem}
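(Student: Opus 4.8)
The plan is to evaluate the product moment $\bM_\Lambda(\xi)$ in closed form by expanding the auxiliary field around its mean and applying the Wick--Isserlis formula, and then to recognise the outcome as precisely the configuration sum that defines the partition function $Z_\Lambda$.

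First I would decompose $\xi_x = \mu + \eta_x$, where $\mu := \re^{-\beta V}$ is the common mean from (\ref{Ecov}) and $\eta := (\eta_x)_{x\in\mZ^n}$ is a centred homogeneous GRF with the covariance $\cov(\eta_x, \eta_y) = C(x-y)$ inherited from $\xi$. Writing $N := \#\Lambda$ and expanding the product over $\Lambda$ by linearity of expectation yields
\[
    \bM_\Lambda(\xi)
    =
    \bE \prod_{x\in\Lambda}(\mu + \eta_x)
    =
    \sum_{S\subseteq\Lambda}
    \mu^{N-\#S}\,
    \bE \prod_{x\in S}\eta_x,
\]
the sum ranging over all subsets $S$ of $\Lambda$.

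The key step is to apply the Wick--Isserlis theorem to each centred Gaussian moment $\bE\prod_{x\in S}\eta_x$: it vanishes for odd $\#S$ and otherwise equals $\sum_{\pi\in\cP_S}\prod_{\{x,y\}\in\pi} C(x-y)$, summed over the pair partitions $\pi$ of $S$. Substituting $C(z) = \re^{-\beta W(z)}$ from (\ref{C}) turns each summand into $\mu^{N-\#S}\prod_{\{x,y\}\in\pi}\re^{-\beta W(x-y)}$. I would then match this against the partition function: every configuration $\omega\in\Omega_\Lambda$ is determined by an even-sized dimer set $D$ together with a pair partition $\pi\in\cP_D$, and its Boltzmann weight $\re^{-\beta U(\omega)}$, read off from the energy (\ref{U}), factorises as $\mu^{N-\#D}\prod_{\{x,y\}\in\pi}\re^{-\beta W(x-y)}$, since each of the $N-\#D$ monomers contributes the factor $\re^{-\beta V} = \mu$. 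Identifying the subset $S$ with the dimer set $D$, the two double sums agree term by term, which gives (\ref{Zxi}).

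The main obstacle, such as it is, is conceptual rather than technical: the non-zero mean of the auxiliary GRF prevents a direct application of Wick's formula, so the shift $\xi = \mu + \eta$ and the ensuing expansion over subsets are what reduce the computation to centred moments. Once this reduction is made, the dictionary ``monomers $\leftrightarrow$ mean factors, dimers $\leftrightarrow$ pairwise covariances'' is exact --- including the degenerate cases $S = \emptyset$ (the all-monomer configuration, of weight $\mu^N$) and odd $\#S$ (no admissible configuration, zero Wick moment). I note in passing that pair partitions never pair a site with itself, so only the off-diagonal values $C(x-y)$ with $x\ne y$ enter, consistent with $W$ being a genuine dimer potential on $\mZ^n\setminus\{0\}$; the extension of $W$ to the origin serves only to secure positive-definiteness of $C$ and hence the existence of $\xi$. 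The remainder is routine combinatorial bookkeeping.
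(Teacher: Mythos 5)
Your proposal is correct and follows essentially the same route as the paper's own proof: centering $\xi_x = \mu + \eta_x$, expanding the product moment over subsets of $\Lambda$, applying the Isserlis--Wick theorem to the centred moments, and substituting the Boltzmann factor $C(z) = \re^{-\beta W(z)}$ to identify the double sum with the configuration sum defining $Z_\Lambda$. Your additional remarks on the degenerate cases and on why $W(0)$ plays no role are accurate and consistent with the paper's own discussion.
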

\begin{proof}
The product moment of the auxiliary GRF $\xi$ over the set $\Lambda$ can be  computed as
\begin{equation}
\label{MLambdaxi}
    \bM_{\Lambda}(\xi)
    =
    \sum_{D\subset \Lambda}
    \mu^{N - \# D}
    \bM_D(\eta)
    =
    \sum_{r=0}^{\lfloor N/2\rfloor}
    \mu^{N-2r}
    \sum_{D\subset \Lambda:\, \# D = 2r}
    \bM_D(\eta),
\end{equation}
where $N := \# \Lambda$, and $\eta:= (\eta_x)_{x\in \mZ^n}$ is a GRF obtained by centering $\xi$ as $\eta_x := \xi_x - \mu$, with
\begin{equation}
\label{mu}
    \mu
    :=
    \re^{-\beta V},
\end{equation}
in conformance with (\ref{Ecov}).
By the Isserlis-Wick theorem \cite{I_1918}, which relates mixed moments of evenly
many zero mean Gaussian random variables with their cross-covariances,\footnote{and is used as an algebraic moment closure in the statistical theory of turbulence \cite[pp. 44--45]{F_1995} and quantum field theory (see also \cite[Theorem 1.28 on pp. 11--12]{J_1997})} it follows that
\begin{equation}
\label{MDeta}
    \bM_D(\eta)
    =
    \sum_{\cP_D}
    \prod_{k=1}^{r}
    \cov(\xi_{x_k}, \xi_{y_k})
    =
    \sum_{\cP_D}
    \prod_{k=1}^{r}
    C(x_k- y_k).
\end{equation}
Here, the sum extends over the class $\cP_D$ of partitions $\{\{x_1, y_1\}, \ldots, \{x_r, y_r\}\}$ of the set $D$ into two-element subsets, with $r:= \#D/2$. Now, upon substitution of (\ref{C}) into (\ref{MDeta}), (\ref{MLambdaxi}) takes the form
\begin{align}
\nonumber
    \bM_{\Lambda}(\xi)
    =&
    \sum_{r=0}^{\lfloor N/2\rfloor}
    \re^{-(N-2r)\beta V}
    \sum_{D\subset \Lambda:\, \#D= 2r}
    \exp
    \left(
        -\beta\sum_{k=1}^{r} W(x_k-y_k)
    \right)\\
\label{MLambdaxi1}
    =&
    \sum_{D\subset \Lambda:\, \#D\, {\rm is\, even}}\
    \sum_{\cP_D}
    \exp
    \left(
        -\beta
        \left(
            (N-\#D)V
            +
            \sum_{k=1}^{\#D/2}
            W(x_k-y_k)
        \right)
    \right).
\end{align}
In view of (\ref{U}) and the definition of the configurational space $\Omega_{\Lambda}$ of the EDM system from Section~\ref{sec:EDM_system},  the right-hand side of (\ref{MLambdaxi1}) coincides with the partition function $Z_{\Lambda}$ in (\ref{Z}), thus proving (\ref{Zxi}).
\end{proof}

In view of Theorem~\ref{th:connection}, the rightmost limit in  (\ref{F}) coincides with the moment Lyapunov
exponent (MLE) of the auxiliary GRF $\xi$:
\begin{equation}
\label{MLE}
    \lim_{\Lambda \to \infty}
    \frac{\ln Z_{\Lambda}}{\# \Lambda}
    =
    \lim_{\Lambda \to \infty}
    \frac
    {\ln \bM_{\Lambda}(\xi)}
    {\# \Lambda}
    =:
    \bm(\xi),
\end{equation}
which is identical to
(\ref{bm}) since the product moments of the GRF being considered are all nonnegative.
This limit is completely specified by the mean $\mu$, given by (\ref{mu}), and the spectral density function $S: [-\pi,\pi]^n\to \mR_+$ defined as the Fourier transform of the covariance function (\ref{C}) by
\begin{equation}
\label{S}
    S(\lambda)
    :=
    \sum_{z\in \mZ^n}
    C(z)\re^{-i\lambda^{\rT} z}
    =
    \sum_{z\in \mZ^n}
    \re^{-\beta W(z)-i\lambda^{\rT} z}.
\end{equation}
Here, $u^{\rT}v$ is the inner product in $\mR^n$,  vectors are organised as columns unless indicated otherwise, and $(\cdot)^{\rT}$ denotes the transpose.
Computing the MLE $\bm(\xi)$, with $\bM_{\Lambda}(\xi)$ in  (\ref{MLE}) being replaced with $|\bM_{\Lambda}(\xi)|$,  is of considerable interest in its own right for a general homogeneous GRF $\xi$.  However, the  salient feature of this problem in the context of the EDM system is that, in view of (\ref{C}), both the mean value and the covariance function of the auxiliary GRF in (\ref{Ecov}) are nonnegative. The nonnegativeness of $C$ implies that the spectral density $S$ in (\ref{S}) is not only symmetric and nonnegative,   but is also positive definite. Hence, $S$ is the  covariance function of yet another homogeneous GRF, this time, on the $n$-dimensional torus $[-\pi,\pi)^n$, whose probability distributions are invariant under the group of entrywise additions modulo $2\pi$.
\begin{remark}
The fact that the choice of $W(0)$,  which affects $C(0)$ in  (\ref{C}), has no influence on the product moments of the auxiliary GRF $\xi$ is explained by their invariance under adding a $\xi$-independent ``white noise'' GRF $\zeta := (\zeta_x)_{x\in \mZ^n}$ whose values $\zeta_x$ are independent Gaussian random variables with zero mean and variance $\gamma^2$. The resulting GRF $\xi+\zeta:= (\xi_x+\zeta_x)_{x\in \mZ^n}$ has spectral density $S + \gamma^2$ and its product moment over a finite set $\Lambda \subset \mZ^n$ is computed as
\begin{equation}
\label{white}
    \bM_{\Lambda}(\xi+\zeta)
    =
    \bE
    \bM_{\Lambda}(\xi+\zeta \mid \xi_{\Lambda})
    =
    \bE
    \prod_{x\in \Lambda}
    (\xi_x + \bE \zeta_x)
    =
    \bM_{\Lambda}(\xi).
\end{equation}
Here, $\xi_{\Lambda}:= (\xi_x)_{x\in \Lambda}$ is the restriction of $\xi$ to the set $\Lambda$, and, in extending (\ref{bM}), use is made of the \emph{conditional product moment}
$
    \bM_{\Lambda}(\eta\mid \fS)
    :=
    \bE
    \left(
        \prod_{x\in \Lambda}
        \eta_x
        \mid
        \fS
    \right)
$
of a random field $\eta:= (\eta_x)_{x\in \mZ^n}$ over the set $\Lambda$ with respect to a $\sigma$-subalgebra $\fS$ (or a conditioning random element which  generates $\fS$).
 In (\ref{white}), we have also used the tower property of iterated conditional expectations \cite{Sh_1995}, and the assumption that $\zeta$ consists of independent zero mean random variables, independent of $\xi$.
Therefore, under the transformation  $\mu\mapsto \rho \mu$ and  $S\mapsto \rho^2S + \gamma^2$, which corresponds to $\xi\mapsto \rho \xi + \zeta$ with a constant factor $\rho >0$, the MLE $\bm(\xi)$ in (\ref{MLE}), as a functional of $\mu$ and the  spectral density  $S$ from (\ref{S}),   is transformed as  $\bm \mapsto \bm + \ln \rho$.
\end{remark}


\section{The classical monomer-dimer problem and moving average Gaussian random fields}\label{sec:rigid}

Consider the structure of the auxiliary GRF for the classical monomer-dimer problem \cite{F_1961,FT_1961,K_1961} as a particular case of EDM systems.
Suppose the dimer potential $W$ on the set $\mZ^n\setminus \{0\}$ is given by
\begin{equation}
\label{Wdimer}
    W(z)
    :=
    \left\{
        \begin{array}{ccl}
        \alpha_k & {\rm if} & z = \pm e_k\\
        +\infty & {\rm if} & |z|>1
        \end{array}
    \right.,
\end{equation}
where $\alpha_k$ is the energy ascribed to dimers which are parallel to the $k$th coordinate axis, and
\begin{equation}
\label{ek}
    e_k
    :=
    (\underbrace{0, \ldots,0}_{k-1\ {\rm zeros}}, 1, \underbrace{0,\ldots,0}_{n-k\ {\rm zeros}})^{\rT}
\end{equation}
is the $k$th basis vector in $\mR^n$. 
Note that (\ref{Wdimer}) forbids dimers $\{x,y\}$ with $|x-y|>1$ and describes the dependence of the energy of a dimer of Euclidean length 1 on its orientation. In particular, if $V:= +\infty$ (so that monomers are energetically forbidden) and $\alpha_1 = \ldots = \alpha_n=0$, then the partition function $Z_{\Lambda}$ in
(\ref{Z}) in this monomer-free isotropic case counts the number of ``domino'' tilings  \cite[p.~125]{B_1982} of the set
$\Lambda$, that is, partitions of $\Lambda$ into pairs of nearest
neighbours, assuming that $\#\Lambda$ is even. In general, the dimer orientations with smaller values of $\alpha_k$ are energetically favoured. The resulting series on the right-hand side of (\ref{dom}) consists of finitely many nonzero terms, and the auxiliary GRF $\xi$ for this classical ``rigid'' dimer
system can be explicitly constructed as follows. Using (\ref{ek}), for every $k = 1, \ldots, n$, we  denote by
\begin{equation}
\label{Ek}
    E_k
    :=
    \mZ^n + e_k/2
\end{equation}
the set of midpoints of the edges between neighbouring sites  in $\mZ^n$ parallel to the $k$th coordinate axis. For any point $x \in \mZ^n$, the nearest sites of $E_k$ are $x\pm e_k/2$, both at the Euclidean distance $1/2$ from $x$. Accordingly, those midpoints from the combined set
\begin{equation}
\label{E}
    E
    :=
    \bigcup_{k=1}^n E_k,
\end{equation}
which are nearest to a given $x\in \mZ^n$, are $x\pm e_k/2$, with $k=1, \ldots, n$. We will now
associate
independent standard normal random variables $\eta_y$ with the sites
$y \in E$, and ascribe,  to every $x:= (x_k)_{1\< k \< n} \in \mZ^n$, a weighted  sum
\begin{equation}
\label{MA}
    \xi_x
    :=
    \mu
    +
    \sum_{k=1}^{n}
    \sqrt{\rho_k}
    (\eta_{x-e_k/2} + \eta_{x+e_k/2}),
\end{equation}
over the $2n$ sites of the set (\ref{E}), nearest to $x$,
where $\rho_1, \ldots, \rho_n$ are nonnegative parameters given by
\begin{equation}
\label{rhok}
    \rho_k
    :=
    \re^{-\alpha_k \beta}.
\end{equation}
 The resulting GRF $\xi:=(\xi_x)_{x\in
\mZ^n}$ is homogeneous and has mean $\mu$ and covariance function
\begin{equation}
\label{MAC}
    C(z)
    :=
    \cov(\xi_0,\xi_z)
    =
    \left\{
    \begin{array}{cl}
    2\sum_{k=1}^{n}
    \rho_k & {\rm if}\ z=0\\
    \rho_k & {\rm if}\ z = \pm e_k\\
    0& {\rm otherwise}
    \end{array}
    \right.,
\end{equation}
which, in view of (\ref{rhok}), is related with the dimer potential (\ref{Wdimer}) by (\ref{C}), with $W$ being extended to the origin as
$
    W(0)
    =
    -T\ln
    (
        2\sum_{k=1}^n
        \rho_k
    )
$.
Therefore, (\ref{MA}) indeed produces an auxiliary  GRF $\xi$ for the classical dimer system. It is a moving average random field which is easy to simulate. The simulation boils down to generating the standard normal random variables at sites of the set $E$ given by (\ref{Ek})--(\ref{E}). This, in principle, provides an alternative probabilistic way for approximate computation of the partition function $Z_{\Lambda}$ for the monomer-dimer problem over a finite set $\Lambda\subset \mZ^n$  by modelling the auxiliary GRF $\xi$ and statistically estimating its product moment $\bM_{\Lambda}(\xi)$ instead of the Monte-Carlo simulation of the underlying particle system which is more complicated.

In view of (\ref{MAC}), the covariance matrix $\cov(\xi_{\Lambda})$ of the restriction of $\xi$ to a finite set $\Lambda\subset \mZ^n$, with $\xi_{\Lambda}$ being considered as a Gaussian random vector of dimension $\# \Lambda$, has a $(2n+1)$-diagonal structure. A similar sparsity structure is known for the inverse covariance matrices (that is, precision matrices) of finite fragments of Markov GRFs \cite{RH_2006}. The Markov GRFs   describe the equilibrium states of systems of linear harmonic oscillators with nearest neighbour interaction and are, in general, hard to simulate in contrast to the moving average random fields above; see also \cite{B_1974,G_1989}.  An important subclass of Markov GRFs which is amenable to efficient unilateral simulation in a single pass through the simulation domain is provided by Pickard random fields (PRFs) \cite{CIG_1998,CI_2000,IG_1999,P_1977,P_1980,TP_1992} which behave in space like autoregressive processes do in time.

\section{Manhattan dimers and Gaussian Pickard random fields}\label{sec:QEDM}

Suppose the dimer potential is given by a weighted $\ell_1$-norm (also called ``Manhattan norm'') of the dimer vector $    z
    :=
    (z_k)_{1\< k \< n} \in \mZ^n
$:
\begin{equation}
\label{manpot}
    W(z)
    :=
    \sum_{k=1}^{n}
    \alpha_k |z_k|.
\end{equation}
Here, $\alpha_1, \ldots, \alpha_n$ are positive parameters which quantify the projections of the force of attraction between the particles in a dimer onto the coordinate axes. Unlike ideal springs, the hypothetical \emph{Manhattan dimer} does not develop a linearly increasing force when being stretched, albeit the attraction between the particles in it persists. The covariance function (\ref{C}), associated with  the Manhattan dimer potential (\ref{manpot}), is
\begin{equation}
\label{mancov}
    C(z)
    =
    \prod_{k=1}^n
    \rho_k^{|z_k|},
\end{equation}
where $0<\rho_1, \ldots, \rho_n<1$ are computed according to (\ref{rhok}). The resulting auxiliary GRF $\xi$ is a Gaussian PRF (GPRF) in dimensions two \cite{TP_1992} and three \cite{IG_1999} and lends itself to the unilateral simulation mentioned above. Such simulation of the GPRF $\xi$ is made possible by the following enhancement of the spatial Markov property for any $x:= (x_k)_{1\< k \< n}\in \mZ^n$:
\begin{equation}
\label{GPRF}
    \[[[\xi_x\mid \xi_{\Lambda_x^-}\]]]
    =
    \[[[\xi_x\mid \xi_{\Lambda_x^0}\]]].
\end{equation}
Here, $\[[[\eta \mid \zeta\]]]$ is a shorthand notation\footnote{We use $\[[[\eta \mid \zeta\]]]$ instead of the more standard notation $\sP_{\eta\mid \zeta}$ in order to mitigate the burden of multitiered  subscripts.} for the conditional probability distribution of $\eta$ given $\zeta$, and
\begin{equation}
\label{order}
    \Lambda_x^-
    :=
    \mZ^n
    \setminus
    \Lambda_x^+,
    \quad
    \Lambda_x^+
    :=
    \mZ^n
    \bigcap
    \mathop{\x}_{k=1}^{n}
    [x_k, +\infty),
    \quad
    \Lambda_x^0
    :=
    \left(\mathop{\x}_{k=1}^{n}
    \{x_k-1,x_k\}\right)
    \setminus \{x\}.
\end{equation}
The sets $\Lambda_x^-$ and $\Lambda_x^+$ are the ``past'' and ``future'' of the lattice with respect to the site $x$ in the spatial sense \cite{CI_2000}. Accordingly, the set $\Lambda_x^0$, which consists of $2^n-1$ points from $\Lambda_x^-$,   plays the role of the nearest past for $x$; see Fig.~\ref{fig:order}.
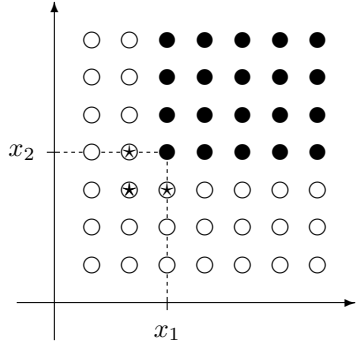
\begin{figure}[htbp]
\centering
\unitlength=1.0mm
\linethickness{0.4pt}
\begin{picture}(50.00,45.00)
    \matrixput(20,25)(5,0){5}(0,5){4}{\circle*{2}}

    \matrixput(10,25)(5,0){2}(0,5){4}{\circle{2}}

    \matrixput(20,10)(5,0){5}(0,5){3}{\circle{2}}

    \matrixput(10,10)(5,0){2}(0,5){3}{\circle{2}}

    \multiput(15,20)(5,0){2}{\put(0,0){\makebox(0,0)[cc]{$\star$}}}
    \multiput(15,20)(0,5){2}{\put(0,0){\makebox(0,0)[cc]{$\star$}}}

    \put(0,5){\vector(1,0){45}}
    \put(5,0){\vector(0,1){45}}

    \put(20,4){\line(0,1){1.75}}
    \makebox(40,2)[ct]{$x_1$}

    \multiput(-20,5)(0,5){4}{\multiput(0,1.25)(0,1){3}{\line(0,1){0.5}}}

    \put(-36,25){\line(1,0){1.75}}
    \makebox(-37.5,50)[rc]{$x_2$}
    \multiput(-35,25)(5,0){3}{\multiput(1.25,0)(1,0){3}{\line(1,0){0.5}}}
\end{picture}
\caption{
    A fragment of the two-dimensional integer lattice $\mZ^2$. The sites of the spatial past $\Lambda_x^-$ and  the spatial future $\Lambda_x^+$ for a given site $x\in \mZ^2$, defined by (\ref{order}), are represented by ``$\circ$''s and ``$\bullet$''s, respectively. The sites of the nearest past $\Lambda_x^0$ are marked by ``$\star$''s.
}
\label{fig:order}
\end{figure}
In the one-dimensional case $n=1$, which we will study in Sections \ref{sec:1d_case}--\ref{sec:pantograph}, the relation (\ref{GPRF}) between the conditional probability laws reduces to the standard Markov property, thus allowing   $\xi$ (which becomes a Gaussian random sequence) to be generated by an autoregressive equation.

A remarkable feature of the Markov structure of the GPRF in the multivariate case $n> 1$ is that its restrictions $\xi_{L_{\ell}}$ to the \emph{level sets} $L_0, L_1, L_2, \ldots $, described below, form a Markov chain of order $n$. This property is employed by the level-sets algorithm of \cite{CD_1998} for the unilateral  simulation of such random fields on the nonnegative orthant  $\mZ_+^n$ of the lattice, where $\mZ_+$ is the set of nonnegative   integers.  For example, in the two-dimensional case $n=2$ elucidated by Fig.~\ref{fig:order},  the  level sets  are given  by
\begin{equation}
\label{L}
    L_{\ell}
    :=
    \{
                (j,k) \in \mZ_+^2:\
                j+k = \ell
    \},\
    \qquad
    \ell \> 0,
\end{equation}
and the restrictions $\xi_{L_{\ell}}$ of the GPRF, which are organised as Gaussian random vectors (with $\xi_{L_{\ell}}$ having dimension $\ell+1$), form a Markov chain of order two. More precisely,  for any $\ell >0$, the conditional probability distribution $\[[[ \xi_{L_{\ell+1}} \mid \xi_{L_0}, \ldots, \xi_{L_{\ell}}\]]]$ coincides with $\[[[ \xi_{L_{\ell+1}} \mid \xi_{L_{\ell-1}}, \xi_{L_{\ell}}\]]]$. Furthermore, the values of $\xi$ at sites of the next level set $L_{\ell+1}$ are conditionally independent given $\xi_{L_{\ell-1}}$ and $\xi_{L_{\ell}}$, with their conditional joint probability distribution
\begin{equation}
\label{prod}
    \[[[ \xi_{L_{\ell+1}} \mid \xi_{L_{\ell-1}}, \xi_{L_{\ell}}\]]]
    =
    \mathop{\x}_{(j,k)\in L_{\ell+1}}
    \[[[
        \xi_{jk} \mid             \xi_{\wh{\Lambda}_{jk}^0}
    \]]]
\end{equation}
being the product of $\ell+2$ conditional Gaussian distributions. Here, $\xi_{jk}$ at site $(j,k)$ is conditioned on the values of $\xi$ in  the nearest past  $\Lambda_{jk}^0$ which is restricted to the nonnegative quadrant $\mZ_+^2$ of the lattice as
\begin{equation}
\label{hatLambda}
    \wh{\Lambda}_{jk}^0
    :=
    \Lambda_{jk}^0
    \bigcap
    \mZ_+^2
    =
    \left\{
        \begin{array}{cl}
            \{(j-1,k-1),\ (j,k-1),\ (j-1,k)\} & {\rm for}\ j>0,\ k>0\\
            \{(j-1,0)\} & {\rm for}\ j>0,\ k=0            \\
            \{(0,k-1)\} & {\rm for}\ j=0,\ k>0
        \end{array}
    \right.
\end{equation}
in conformance with (\ref{order}) for the two-dimensional case. In terms of the centered random field $\sigma:= (\sigma_x)_{x \in \mZ^2}$, defined by
\begin{equation}
\label{sigma}
    \sigma_x
    :=
    \xi_x - \mu,
\end{equation}
with $\mu$ associated with the monomer energy by (\ref{mu}) as before, the conditional Gaussian distributions on the right-hand side of (\ref{prod}) are given by
\begin{equation}
\label{cond_gauss}
    \[[[
        \sigma_{jk}
        \mid
        \sigma_{\wh{\Lambda}_{jk}^0}
    \]]]
     =
    \left\{
        \begin{array}{cl}
            \cN(
                a \sigma_{j-1,k-1}
                +
                b \sigma_{j,k-1}
                +
                c\sigma_{j-1,k},\,
                d^2
            ) & {\rm for}\ j>0,\ k>0\\
            \cN(
                \rho_1 \sigma_{j-1,0},\,
                \gamma_1^2
            ) & {\rm for}\ j>0,\ k=0            \\
            \cN(
                \rho_2 \sigma_{0,k-1},\,
                \gamma_2^2
            ) & {\rm for}\ j=0,\ k>0
        \end{array}
    \right.,
\end{equation}
where $\rho_1$, $\rho_2$ are related to the attraction force parameters $\alpha_1$, $\alpha_2$ of the Manhattan dimer potential (\ref{manpot}) by (\ref{rhok}), and the conditional variances $\gamma_1^2$, $\gamma_2^2$ of the last two distributions are computed as
\begin{equation}
\label{gamma12}
    \gamma_1
    :=
    \sqrt{1-\rho_1^2},\
    \qquad
    \gamma_2
    :=
    \sqrt{1-\rho_2^2}.
\end{equation}
 This follows from the well-known results on conditional distributions for jointly Gaussian random variables \cite{Sh_1995}.
The coefficients $a$, $b$, $c$ and the conditional variance $d^2$ of the Gaussian distribution for $j,k>0$ in (\ref{cond_gauss}) are computed as
\begin{eqnarray}
\nonumber
    \begin{bmatrix}
        a,\
        b, \
        c
    \end{bmatrix}
    & = &
    \cov
    (
        \sigma_{jk},
        \varpi_{jk}
    )
    \cov
    (
        \varpi_{jk}
    )^{-1}\\
\label{abc}
    & = &
    \begin{bmatrix}
        \rho_1\rho_2, \
        \rho_2, \
        \rho_1
    \end{bmatrix}
    \begin{bmatrix}
        1       & \rho_1    & \rho_2        \\
        \rho_1  & 1         & \rho_1\rho_2  \\
        \rho_2  & \rho_1\rho_2 & 1
    \end{bmatrix}^{-1}
     =
    \begin{bmatrix}
        -\rho_1\rho_2, \
        \rho_2, \
        \rho_1
    \end{bmatrix},\\
    \nonumber\\
\nonumber
    d^2
    & = &
    \var(\sigma_{jk})
    -
    \begin{bmatrix}
        a, \
        b, \
        c
    \end{bmatrix}
    \cov
    (
        \sigma_{jk},
        \varpi_{jk}
    )^{\rT}    \\
\label{d}
    & = &
    1-
    \begin{bmatrix}
        -\rho_1\rho_2, \
        \rho_2, \
        \rho_1
    \end{bmatrix}
    \begin{bmatrix}
        \rho_1\rho_2\\
        \rho_2\\
        \rho_1
    \end{bmatrix}
    =
    (1-\rho_1^2) (1-\rho_2^2)
    =
    \gamma_1^2 \gamma_2^2,
\end{eqnarray}
with $\var(\cdot)$ the variance of a random variable.
Here, the conditioning restriction $\sigma_{\wh{\Lambda}_{jk}^0}$ of the centered random field (\ref{sigma}) to the set (\ref{hatLambda}) is organized as the three-dimensional  row-vector
$
    \varpi_{jk}
    :=
        \begin{bmatrix}
            \sigma_{j-1,k-1},\
            \sigma_{j,k-1}, \
            \sigma_{j-1,k}
        \end{bmatrix}
$
whose covariance $(3\x 3)$-matrix $\cov(\varpi_{jk})$ and the row-vector $\cov(\sigma_{jk}, \varpi_{jk})$ of cross-covariances with $\sigma_{jk}$ are appropriate submatrices of the covariance matrix
$$
    \cov
    \left(
        \begin{bmatrix}
            \xi_{j-1,k-1}\\
            \xi_{j,k-1}\\
            \xi_{j-1,k}\\
            \xi_{jk}\\
        \end{bmatrix}
    \right)
    =
    \begin{bmatrix}
        1       & \rho_1    & \rho_2        & \rho_1\rho_2\\
        \rho_1  & 1         & \rho_1\rho_2  & \rho_2\\
        \rho_2  & \rho_1\rho_2 & 1 & \rho_1\\
        \rho_1\rho_2  & \rho_2         & \rho_1  & 1
    \end{bmatrix}
$$
obtained by evaluating (\ref{mancov}) for  the square cell $\{j-1, j\}\x \{k-1, k\}$ of the lattice; see also \cite{P_1980,TP_1992}.

\section{Aggregation of level sets}\label{sec:aggregation}

We will now employ the technique of reducing the order of a Markov chain by augmenting its state space.
For any $\ell>0$, consider the restriction of the centered GPRF $\sigma$, defined by (\ref{sigma}), to the union
\begin{equation}
\label{Theta}
    \Theta_{\ell}
    :=
            L_{\ell-1}\bigcup L_{\ell}
\end{equation}
of two adjacent level sets from (\ref{L}) which consists of $2\ell + 1$ lattice sites.  Also, let $\Theta_0:= L_0$ be the singleton formed by the origin of $\mZ^2$. The resulting sequence of random vectors $\sigma_{\Theta_{\ell}}$, labeled by $\ell \> 0$, is a Markov chain of order one.  In view of the reflection symmetry of the covariance function (\ref{mancov}) with respect to the coordinate axes, the restrictions $\sigma_{-\Theta_{\ell}}$ of $\sigma$ to the sets $-\Theta_{\ell}$, which are centrally symmetric to the aggregated level sets (\ref{Theta}) about the origin,  also form a Markov chain. Furthermore, the Markov property remains valid upon removal of the ``endpoints'' $(0,-\ell)$ and $(-\ell,0)$ from the set  $-\Theta_{\ell}$. More precisely, the restrictions $\sigma_{\Gamma_{\ell}}$ of the GPRF $\sigma$ to the sets $\Gamma_0, \Gamma_1, \Gamma_2, \ldots$ defined by
\begin{eqnarray}
\nonumber
    \Gamma_N
    & := &
    (-\Theta_{N+1} )
    \setminus
    \{(0,-N-1),\ (-N-1,0)\}\\
\nonumber
    & = &
    \{(0,-N),\
    (0,-N-1),\
    (-1,-N-1),\ \ldots, \\
\label{Gamma}
    & &
    (-N+1,-1),\
    (-N,-1),\
    (-N, 0)\},
\end{eqnarray}
also form a Markov chain. We label the values of $\sigma$ at sites of the set $\Gamma_N$ by integers $0$ to $2N$ as shown in Fig.~\ref{fig:zigzag},
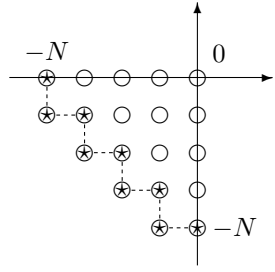
\begin{figure}[htbp]
\centering
\unitlength=1.0mm
\linethickness{0.4pt}
\begin{picture}(50,35)

    \put(10,27){\makebox(0,0)[cb]{$-N$}}

    \put(32,5){\makebox(0,0)[lc]{$-N$}}

    \put(32,27){\makebox(0,0)[lb]{$0$}}

    \multiput(10,25)(5,-5){4}
    {
        \put(0,0){{\circle{2}}}
        \put(0,0){\makebox(0,0)[cc]{$\star$}}
        \multiput(0,-1.25)(0,-1){3}{\line(0,-1){0.5}}

        \multiput(1.25,-5)(1,0){3}{\line(1,0){0.5}}

        \put(0,-5){{\circle{2}}}
        \put(0,-5){\makebox(0,0)[cc]{$\star$}}
    }

    \multiput(15,25)(5,-5){4}{\circle{2}}

    \multiput(20,25)(5,-5){3}{\circle{2}}

    \multiput(25,25)(5,-5){2}{\circle{2}}

    \put(30,25){\circle{2}}

        \put(30,5){{\circle{2}}}
        \put(30,5){\makebox(0,0)[cc]{$\star$}}

    \put(5,25){\vector(1,0){35}}
    \put(30,0){\vector(0,1){35}}

\end{picture}
\caption{
  The subset $\Delta_N$ of the nonpositive quadrant $\mZ_-^2$ of the planar integer lattice, defined by (\ref{Delta}), is represented by ``$\circ$''s for $N=4$. The sites of the set $\Gamma_N$ from (\ref{Gamma}) are marked by ``$\star$''s and labeled by integers $0$ to $2N$ in a ``zigzag'' manner (along the dashed line) starting from the point $(0,-N)$.
}
\label{fig:zigzag}
\end{figure}
which allows $\sigma_{\Gamma_N}$ to be  identified with a $(2N+1)$-dimensional Gaussian random vector
\begin{eqnarray}
\nonumber
    \Sigma_N
     :=
    (\Sigma_{N,k})_{0\< k\< 2N}
    & := &
    \{
        \sigma_{0,-N},\
        \sigma_{0,-N-1},\
        \sigma_{-1,-N-1},\ \ldots, \\
\label{Sigma}
    & &
        \sigma_{-N+1,-1},\
        \sigma_{-N,-1},\
        \sigma_{-N, 0}\}.
\end{eqnarray}
Due to this numbering, $\Sigma_N$ is obtained by sampling the GPRF $\sigma$ along a monotone path in the planar lattice. Hence, by the results of \cite{P_1980,TP_1992}, for every given $N\> 0$, the entries $\Sigma_{N,0}, \ldots, \Sigma_{N,2N}$ of the random vector $\Sigma_N$ form a nonhomogeneous Markov chain. Moreover, by evaluating the covariance function (\ref{mancov}) for the zigzag shaped set $\Gamma_N$, it follows that $\Sigma_N$ is an alternating autoregressive sequence \cite{MSW_2007} governed by the recurrence equation
\begin{equation}
\label{AAR}
    \Sigma_{N,j+1}
    =
    \left\{
        \begin{array}{cl}
            \rho_1 \Sigma_{N,j} + \gamma_1 w_{N,j} & {\rm for}\ j\ {\rm even}\\
            \rho_2 \Sigma_{N,j} + \gamma_2 w_{N,j} & {\rm for}\ j\ {\rm odd}\\
        \end{array}
    \right.,
\end{equation}
where the  initial element $\Sigma_{N,0}$ and the innovations $w_{N,0}, w_{N,1}, w_{N,2}, \ldots$ are independent standard normal random variables,
and use is made of (\ref{gamma12}). For any $N>0$, the conditional Gaussian distribution of $\Sigma_{N-1}$ given $\Sigma_N$ is degenerate since these random vectors have common entries:
\begin{equation}
\label{common}
    \Sigma_{N,2k}
    =
    \Sigma_{N-1,2k-1},
    \qquad
    1\< k <N.
\end{equation}
The other entries of $\Sigma_{N-1}$ (namely,  $\Sigma_{N-1,2k}$ for $0\< k <N$) are conditionally independent given $\Sigma_N$. More precisely,
\begin{equation}
\label{SS}
    \[[[
        (\Sigma_{N-1,2k})_{0\< k < N}
        \mid
        \Sigma_N
    \]]]
    =
    \mathop{\x}_{k=0}^{N-1}
    \[[[
        \Sigma_{N-1,2k}
        \mid
        \Sigma_{N,2k},\
        \Sigma_{N,2k+1},\
        \Sigma_{N,2k+2}
    \]]],
\end{equation}
where each of the conditional distributions on the right-hand side is described by the first of the Gaussian distributions from (\ref{cond_gauss}):
\begin{align}
\nonumber
    \[[[
        \Sigma_{N-1,2k}
        &\mid
        \Sigma_{N,2k},\
        \Sigma_{N,2k+1},\
        \Sigma_{N,2k+2}
    \]]]\\
\label{SSS}
    & =
    \cN(\rho_2\Sigma_{N,2k} - \rho_1\rho_2 \Sigma_{N,2k+1}+\rho_1 \Sigma_{N,2k+2},\ \gamma_1^2\gamma_2^2 ).
\end{align}

\begin{remark}
Notice the local nature of the conditional distribution $\[[[\Sigma_{N-1} \mid \Sigma_N\]]]$. Indeed, (\ref{common})--(\ref{SS}) imply that $\[[[\Sigma_{N-1,j} \mid \Sigma_N\]]] = \[[[\Sigma_{N-1,j} \mid \Sigma_{N,j}, \ \Sigma_{N,j+1}, \ \Sigma_{N,j+2}\]]]$ involves at most three entries of the random vector $\Sigma_N$ for every $0\< j \< 2N-2$. This property is inherited from the localness of the conditional probability measures describing the transitions of PRFs between the level sets.
\end{remark}

\section{Recurrence relation for conditional product moments}\label{sec:cond_prod_moms_GPRF}

Assuming $N\> 0$ and using the sets $\Gamma_0, \ldots, \Gamma_N$ from (\ref{Gamma}), we will now consider the two-dimensional Manhattan EDM system on the set
\begin{equation}
\label{Delta}
    \Delta_N
    :=
    \bigcup_{\ell=0}^{N}
    \Gamma_{\ell},
\end{equation}
which consists of $\#\Delta_N = N(N+5)/2+1$ lattice sites; see Fig.~\ref{fig:zigzag}. The product moment of the auxiliary GPRF $\xi$ over $\Delta_N$ is representable as
\begin{equation}
\label{MR}
    \bM_{\Delta_N}(\xi)
    =
    \bE R_N(\Sigma_N).
\end{equation}
Here, the function $R_N: \mR^{2N+1}\to \mR$ is defined as the conditional product moment of $\xi$ over the set $\Delta_N$ with respect to the random vector $\Sigma_N$ from (\ref{Sigma}):
\begin{equation}
\label{R}
    R_N(\Sigma_N)
    :=
    \bM_{\Delta_N}(\xi \mid \Sigma_N).
\end{equation}
In particular, since the set $\Delta_0$ is a singleton which consists of the origin of $\mZ^2$, the definition (\ref{R}) yields
\begin{equation}
\label{R0}
    R_0(s)
    =
    \bE(\xi_{00}\mid \sigma_{00} = s)
    =
    \mu+s.
\end{equation}

\begin{theorem}
\label{th:GPRF}
For any $N>0$, the conditional product moment function $R_N$ of the auxiliary GPRF $\xi$ for  the two-dimensional Manhattan EDM system satisfies the recurrence equation
\begin{eqnarray}
\nonumber
    R_N(\Sigma_N)
    & = &
    \bE
    (
            R_{N-1}(\Sigma_{N-1})
            \mid
        \Sigma_N
    )\\
\label{Rnext}
    &&\x (\mu + \Sigma_{N,0}) (\mu + \Sigma_{N,2N})
    \prod_{k=0}^{N-1} (\mu + \Sigma_{N,2k+1}),
\end{eqnarray}
with the initial condition (\ref{R0}). Here, the conditional expectation is taken over the conditional probability distribution $\[[[\Sigma_{N-1}\mid \Sigma_N\]]]$ described by (\ref{common})--(\ref{SSS}).
\end{theorem}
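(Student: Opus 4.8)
The plan is to peel off the outermost sites of $\Delta_N$ by decomposing it as $\Delta_N = \Delta_{N-1}\bigcup \Gamma_N$ and isolating those lattice sites of $\Gamma_N$ that are \emph{not} already contained in $\Delta_{N-1}$. First I would determine this set of ``new'' sites. By (\ref{Gamma}), the set $\Gamma_N$ lies on the two anti-diagonals $j+k=-N$ and $j+k=-N-1$ of the nonpositive quadrant, whereas $\Delta_{N-1}=\bigcup_{\ell=0}^{N-1}\Gamma_{\ell}$ only reaches the anti-diagonal $j+k=-N$, and on that anti-diagonal it omits the two corner points $(0,-N)$ and $(-N,0)$. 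Combined with the overlap relation (\ref{common}), which identifies the even-indexed entries $\Sigma_{N,2k}$ ($1\<k<N$) of $\Sigma_N$ with entries of $\Sigma_{N-1}$, this shows that the sites of $\Gamma_N$ lying outside $\Delta_{N-1}$ are exactly the two endpoints (indices $0$ and $2N$) together with all odd-indexed sites (indices $2k+1$, $0\<k<N$), that is, a total of $N+2$ sites.

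With this bookkeeping in hand, the product defining $\bM_{\Delta_N}(\xi)$ factors as $\prod_{x\in \Delta_N}\xi_x = \big(\prod_{x\in \Delta_{N-1}}\xi_x\big)\prod_{x\in \Gamma_N\setminus \Delta_{N-1}}\xi_x$, and by (\ref{sigma})--(\ref{Sigma}) the second factor equals $(\mu+\Sigma_{N,0})(\mu+\Sigma_{N,2N})\prod_{k=0}^{N-1}(\mu+\Sigma_{N,2k+1})$. Since this factor is a deterministic function of $\Sigma_N$, it can be pulled out of the conditional expectation defining $R_N$ in (\ref{R}), leaving the problem of evaluating $\bE\big(\prod_{x\in \Delta_{N-1}}\xi_x \mid \Sigma_N\big)$.

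To handle this remaining conditional product moment I would apply the tower property by inserting the intermediate conditioning on $(\Sigma_{N-1},\Sigma_N)$, so that $\bE\big(\prod_{x\in \Delta_{N-1}}\xi_x \mid \Sigma_N\big) = \bE\big(\bE(\prod_{x\in \Delta_{N-1}}\xi_x \mid \Sigma_{N-1},\Sigma_N)\mid \Sigma_N\big)$. Here $\prod_{x\in \Delta_{N-1}}\xi_x$ is a function of $\Sigma_0,\ldots,\Sigma_{N-1}$ alone, since every site of $\Delta_{N-1}$ belongs to some $\Gamma_{\ell}$ with $\ell\<N-1$. Invoking the order-one Markov property of the aggregated chain $\Sigma_0,\Sigma_1,\Sigma_2,\ldots$ established in Section~\ref{sec:aggregation}, the strict past $(\Sigma_0,\ldots,\Sigma_{N-2})$ is conditionally independent of the future $\Sigma_N$ given the present $\Sigma_{N-1}$; hence the inner conditional expectation collapses to $\bE(\prod_{x\in \Delta_{N-1}}\xi_x\mid \Sigma_{N-1}) = R_{N-1}(\Sigma_{N-1})$ by (\ref{R}). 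Substituting this back gives $\bE(\prod_{x\in \Delta_{N-1}}\xi_x\mid \Sigma_N)=\bE(R_{N-1}(\Sigma_{N-1})\mid \Sigma_N)$, and multiplying by the factor extracted above yields (\ref{Rnext}); the initial condition (\ref{R0}) handles the base case.

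I expect the main obstacle to be the first step: the careful geometric and index bookkeeping needed to verify that the sites of $\Gamma_N$ lying outside $\Delta_{N-1}$ are precisely the two endpoints and the $N$ odd-indexed sites, matching the factors displayed in (\ref{Rnext}). Once this is settled, the remaining manipulation is a routine application of the tower property together with the Markov property already in hand, the relevant conditional distribution $\[[[\Sigma_{N-1}\mid \Sigma_N\]]]$ being supplied explicitly by (\ref{common})--(\ref{SSS}).
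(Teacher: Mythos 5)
Your proof is correct and follows essentially the same route as the paper's: the decomposition $\Delta_N = \Delta_{N-1}\bigcup(\Gamma_N\setminus\Delta_{N-1})$, pulling the $\Sigma_N$-measurable product $(\mu+\Sigma_{N,0})(\mu+\Sigma_{N,2N})\prod_{k=0}^{N-1}(\mu+\Sigma_{N,2k+1})$ out of the conditional expectation, and then the tower property combined with the Markov property of $\Sigma_0,\ldots,\Sigma_N$ to collapse $\bM_{\Delta_{N-1}}(\xi\mid\Sigma_{N-1},\Sigma_N)$ to $R_{N-1}(\Sigma_{N-1})$. Your explicit bookkeeping identifying $\Gamma_N\setminus\Delta_{N-1}$ as the two endpoint sites plus the $N$ odd-indexed sites is exactly what the paper asserts in its formula (\ref{GammaDelta}), merely spelled out in more detail.
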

\begin{proof}
By partitioning the set $\Delta_N$ from (\ref{Delta}) into two disjoint subsets $\Delta_{N-1}$ and $\Delta_N\setminus \Delta_{N-1} = \Gamma_N \setminus \Delta_{N-1}$, with $\Gamma_N$ given by (\ref{Gamma}) (see also Fig.~\ref{fig:zigzag}), it follows that the function (\ref{R}) can be factorized as
\begin{eqnarray}
\nonumber
    R_N(\Sigma_N)
    & = &
    \bE
    \left(
    \left.
        \prod_{x \in \Gamma_N\setminus \Delta_{N-1}}
        \xi_x
        \prod_{y \in \Delta_{N-1}}
        \xi_y\,
        \right|\,
        \Sigma_N
    \right)\\
\label{Rsplit}
    & = &
    \bM_{\Delta_{N-1}}(\xi \mid \Sigma_N)
    \prod_{x \in \Gamma_N\setminus \Delta_{N-1}}
    \xi_x.
\end{eqnarray}
Here,
\begin{equation}
\label{GammaDelta}
    \prod_{x \in \Gamma_N\setminus \Delta_{N-1}}
    \xi_x
    =
    (\mu + \Sigma_{N,0}) (\mu + \Sigma_{N,2N})
    \prod_{k=0}^{N-1} (\mu + \Sigma_{N,2k+1})
\end{equation}
is a polynomial function of the appropriate entries of the random vector $\Sigma_N$ in (\ref{Sigma}). Furthermore,
\begin{eqnarray}
\nonumber
    \bM_{\Delta_{N-1}}(\xi \mid \Sigma_N)
    & = &
    \bE
    (\bM_{\Delta_{N-1}}(\xi \mid \Sigma_{N-1}, \Sigma_N)\mid \Sigma_N)\\
\nonumber
    & = &
    \bE
    (\bM_{\Delta_{N-1}}(\xi \mid \Sigma_{N-1})\mid \Sigma_N)    \\
\label{MMM}
    &=&
    \bE
    (R_{N-1}(\Sigma_{N-1})\mid \Sigma_N),
\end{eqnarray}
where, in addition to the tower property of iterated conditional expectations, we have used the relation $\[[[\xi_{\Delta_{N-1}} \mid \Sigma_{N-1}, \Sigma_N\]]] = \[[[\xi_{\Delta_{N-1}} \mid \Sigma_{N-1}\]]]$ which follows from the measurability of $\xi_{\Delta_{N-1}}$ with respect to $\Sigma_0, \ldots, \Sigma_{N-1}$ and the Markov property of the sequence $\Sigma_0, \ldots, \Sigma_N$. The recurrence equation (\ref{Rnext}) is now obtained by substitution of (\ref{GammaDelta}) and (\ref{MMM}) into (\ref{Rsplit}).
\end{proof}

By induction, (\ref{R0}) and (\ref{Rnext}) imply that the function $R_N$ is a polynomial (in $2N+1$ scalar variables) of degree $\#\Delta_N$. Therefore, calculating the partition function of the two-dimensional Manhattan EDM system for the set $\Delta_N$ through the product moment of the auxiliary GPRF $\xi$ according to (\ref{MR}) consists in evaluating the expectation of the recursively computed polynomial $R_N$ on the segment $\Sigma_N$ of the alternating autoregressive sequence governed by (\ref{AAR}). For example, by recalling the conditional distribution (\ref{cond_gauss})--(\ref{d}), it follows that
\begin{eqnarray*}
\nonumber
    R_1(s_0,s_1,s_2)
    & = &
    \bE(R_0(\sigma_{00}) \mid \sigma_{0,-1}=s_0,\, \sigma_{-1,-1}=s_1,\, \sigma_{-1,0}=s_2 )\\
\nonumber
    && \x (\mu+s_0)(\mu+s_1)(\mu+s_2)\\
\label{R1}
    & = &
        (\mu+\rho_2 s_0 - \rho_1\rho_2 s_1 + \rho_1 s_2)
        (\mu+s_0)(\mu+s_1)(\mu+s_2)
\end{eqnarray*}
is a quartic polynomial. Further iterations of the recurrence relation (\ref{Rnext}) involve the conditional expectations of higher order polynomials applied to conditionally independent Gaussian random variables; cf. (\ref{SS}), (\ref{SSS}). Although this recurrence is not easy to implement, its significance is that the above discussed localness  of the conditional probability distribution $\[[[\Sigma_{N-1} \mid \Sigma_N\]]]$, which specifies the linear operator $R_{N-1}\mapsto R_N$ in (\ref{Rnext}), appears to be a striking reduction of the long-range nature of interactions in the underlying Manhattan EDM system.\footnote{Note that similar recurrence relations, employed in the statistical mechanical transfer matrix method \cite{B_1982} and its corner version \cite{B_1981},  are known for  spin systems with \emph{short-range} (for example, nearest neighbour or round-a-face) interactions.}

\begin{remark}
The ``localization'' of the original problem above has been achieved  through its connection with the product moments of the auxiliary GPRF and the possibility of their recursive computation (in terms of the conditional product moments) due to the specific Markov structure of the random field. A similar line of reasoning, employing the aggregated level sets and associated conditional product moments, is applicable in three dimensions for the trivariate GPRFs from \cite{IG_1999}. This suggests that such an indirect approach to the monomer-dimer problem and its extensions may contain a hidden resource which is worth exploring.
\end{remark}

\section{One-dimensional Manhattan dimer-monomer system}\label{sec:1d_case}

Consider the Manhattan EDM system in the one-dimensional case, where the dimer potential (\ref{manpot}) reduces to
$
    W(z)
    :=
    \alpha |z|
$ for $z\in \mZ$,
with
$\alpha>0$ the attraction force parameter. In this case, the auxiliary GRF, which becomes a sequence $\xi:= (\xi_k)_{k \in \mZ}$, is a homogeneous Gaussian Markov chain  with mean $\mu$ from (\ref{mu}) and covariance function
$
    \cov(\xi_j,\xi_k)
    =
    \rho^{|j-k|}
$,
where, in accordance with (\ref{rhok}) and (\ref{mancov}),
\begin{equation}
\label{rho}
    \rho
    :=
    \re^{-\alpha\beta},
\end{equation}
and the shorthand notation (\ref{beta}) is used. In view of (\ref{MLE}), the MLE of the sequence $\xi$
takes the form
\begin{equation}
\label{muZ}
    \bm(\xi)
    =
    \lim_{N \to +\infty}
    \frac{\ln M_N}{N},
\end{equation}
where
\begin{equation}
\label{M_N}
    M_N
    :=
    \bM_{0:N-1}(\xi)
    =
    \bE
    \prod_{k=0}^{N-1}
    \xi_k
\end{equation}
is the product moment over the discrete interval $0:N-1$, with $a:b$ denoting the set $\mZ\bigcap [a,b]$   of integers from $a$ to $b$. For convenience of calculations, we will use a centered sequence $\sigma:= (\sigma_k)_{k \in \mZ}$, defined by
$
    \sigma_k := \xi_k - \mu
$
and satisfying an
 autoregressive equation
\begin{equation}
\label{SFDE}
    \sigma_{k+1}
    =
    \rho\sigma_k + \gamma w_k
\end{equation}
for $k\> 0$. Here, the initial condition $\sigma_0$ and the innovations $w_0, w_1, w_2,
\ldots$ are independent standard normal random variables, and
\begin{equation}
\label{gamma}
    \gamma
    :=
    \sqrt{1-\rho^2}.
\end{equation}
The autoregressive sequence $\sigma$ can be  obtained by
sampling an Ornstein-Uhlenbeck
diffusion process $Y := (Y_t)_{t \in \mR_+}$ with time step $\epsilon :=
\alpha \beta$ as
$
    \sigma_k
    :=
    Y_{k \epsilon}
$. The process $Y$ is governed by an
It\^{o} stochastic differential equation
$
    \rd Y_t = -Y_t\rd t + \sqrt{2} \rd \cW_t
$, where $\cW := (\cW_t)_{t \in \mR_+}$ is a standard Wiener process, independent of the  standard normal initial value $Y_0$.


\section{Conditional product moments and transfer operator}\label{sec:cond_prod_moms}

For the one-dimensional Manhattan EDM system of the previous section, the product moment (\ref{M_N})
of the auxiliary Gaussian Markov chain $\xi$
is representable as
\begin{equation}
\label{MN}
    M_N
    =
    \bE
    Q_N(\sigma_0).
\end{equation}
Here, for any $N \> 0$, the function $Q_N: \mR \to \mR$ is
defined by the conditional product moment
\begin{equation}
\label{Q}
    Q_N(\sigma_0)
    :=
    \bM_{0: N-1}(\xi\mid \sigma_0)
    =
    \bE
    \left(
        \prod_{k=0}^{N-1}
        \xi_k\,
        \Big|\,
        \sigma_0
    \right),
\end{equation}
with $Q_0 \equiv 1$. Note that $Q_N$ is a univariate counterpart of the function $R_N$ associated with the two-dimensional Manhattan EDM system by (\ref{R}). Thus, by employing a line of reasoning similar to Theorem~\ref{th:GPRF}, and  using the Markov property and homogeneity of the sequence $\xi$, it follows that the function $Q_{N+1}$ is expressed in terms of $Q_N$ from (\ref{Q})  through a linear \emph{transfer operator} $G$ as
\begin{eqnarray}
\nonumber
    Q_{N+1}(\sigma_0)
 &=&
    \bM_{0: N}(\xi\mid \sigma_0)\\
\nonumber
 & =&
    \xi_0\bE(\bM_{1:N}(\xi\mid \sigma_0, \sigma_1)\mid \sigma_0)
 =
    \xi_0 \bE(\bM_{1:N}(\xi\mid \sigma_1)\mid \sigma_0)\\
\label{nextQ}
 &=&
    (\mu + \sigma_0)
    \bE
    (
        Q_N(\sigma_1)
        \mid
        \sigma_0
    )
     =:
    G(Q_N)(\sigma_0).
\end{eqnarray}
Here, the last expectation is taken over the transition probability
law of the sequence $\sigma$ which, in view of (\ref{SFDE}) and
(\ref{gamma}), is described by
\begin{equation}
\label{1|0}
    \[[[\sigma_1 \mid \sigma_0\]]]
    =
    \cN(
        \rho \sigma_0,
        \gamma^2
    ),
\end{equation}
that is, the conditional Gaussian distribution
with mean $\bE(\sigma_1 \mid \sigma_0) =
\rho \sigma_0$, variance $\var(\sigma_1 \mid \sigma_0) = \gamma^2$ and PDF
\begin{equation}
\label{PDF_1|0}
    p_{1|0}(y\mid x)
    :=
    \re^{
        -
        (y-\rho x)^2/(2\gamma^2)}
    \big/ (\sqrt{2\pi}\gamma).
\end{equation}
 By induction, (\ref{nextQ}) and (\ref{1|0}) imply that the function $Q_N$ is a polynomial of degree
 $N$ with the leading coefficient
 $
    \lim_{x\to \infty}
    (Q_N(x)/x^N)
    =
    \prod_{k=0}^{N-1}
    \rho^k
    =
    \rho^{N(N-1)/2}
 $.
 The latter also employs the property that if $X \sim \cN(m,s^2)$ with a fixed variance $s^2$, and $L$ is a polynomial, then  $\bE L(X)$ is a polynomial in the mean value $m$ with the same leading term as $L$.  In particular, the first three of the polynomials $Q_N$ are computed as
\begin{equation*}
\begin{split}
Q_1(\sigma_0)
    & =
    \mu + \sigma_0,\\
Q_2(\sigma_0)
    & =
    (\mu + \sigma_0)
    \bE(\mu + \sigma_1\mid \sigma_0)
    =
    (\mu+\sigma_0)(\mu + \rho\sigma_0),\\
Q_3(\sigma_0)
    & =
    (\mu+\sigma_0)
    \bE((\mu+\sigma_1)(\mu + \rho\sigma_1)\mid \sigma_0)\\
    &=
    (\mu+\sigma_0)
    ((\mu+\rho \sigma_0)(\mu + \rho^2\sigma_0) + \rho(1-\rho^2)),
\end{split}
\end{equation*}
which shows that the recursive calculation of their coefficients, which are needed for (\ref{MN}),  in the standard basis of monomials $1, x, x^2, \ldots$ quickly becomes complicated. In Section~\ref{sec:matrix}, we will perform these calculations in a more suitable basis of Hermite polynomials after establishing the boundedness of the transfer operator $G$ which governs the recurrence equation  (\ref{nextQ}).

\begin{remark}
A similar recursive computation of conditional product moments can also be developed for a wider class of homogeneous Gaussian random sequences with rational spectral densities which are realizable as a hidden Markov chain with a higher dimensional state space.
\end{remark}

\section{An upper bound for the moment Lyapunov exponent.}\label{sec:upper}

Let $\mH$ denote the Hilbert space of functions $h: \mR \to \mR$ which are square integrable over the standard normal PDF $\nu$ from (\ref{nu}) and are endowed with the norm $\|h\|:= \sqrt{\bra h,h\ket} $ and inner product
$
    \bra f,g\ket
    :=
    \int_{\mR}
    f(x)g(x)
    \nu(x)
    \rd x
    =
    \bE (f(X)g(X))
$,
where $X$ is a standard normal random variable.
The Hermite polynomials (\ref{H}) form an orthogonal basis of the space $\mH$, with  $\|H_k\| = \sqrt{k!}$; see, for example, \cite[pp. 19--20]{J_1997}.
The following theorem establishes an upper bound for the  $\|\cdot\|$-induced norm of the
transfer operator $G$ defined by (\ref{nextQ}):
\begin{equation}
\label{normG}
    \sn G \sn
    :=
    \sup_{h \in \mH\setminus \{0\}}
    \frac{\|G(h)\|}{\|h\|}.
\end{equation}

\begin{theorem}
\label{th:upper}
The norm (\ref{normG}) of the transfer operator $G$ in (\ref{nextQ}), associated with the one-dimensional Manhattan EDM system, satisfies
\begin{equation}
\label{Gupper}
    \sn G\sn
     \<
    \frac{1}{\gamma}
    \sqrt{
        \mu^2
        +
        \frac
        {1+\rho^2}
        {\gamma^2}
    },
\end{equation}
\end{theorem}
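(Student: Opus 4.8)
The plan is to realize the transfer operator $G$ from (\ref{nextQ}) as an integral operator on the Gaussian Hilbert space $\mH$ and to bound its norm (\ref{normG}) from above by its Hilbert--Schmidt norm, which — unlike $\sn G\sn$ itself — can be computed in closed form and turns out to equal \emph{exactly} the right-hand side of (\ref{Gupper}). This is the natural route because $G$ is the composition of multiplication by $\mu+x$ (an unbounded operator) with the conditional-expectation operator $h\mapsto \bE(h(\sigma_1)\mid\sigma_0=\cdot)$ (a contraction), so neither a crude triangle-inequality split nor a direct eigen-estimate delivers a clean closed form, whereas the Mehler-type kernel makes the Hilbert--Schmidt double integral collapse.

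First I would rewrite $G$ using (\ref{1|0})--(\ref{PDF_1|0}):
\begin{equation*}
    G(h)(x)
    =
    (\mu+x)\int_{\mR} h(y)\,p_{1|0}(y\mid x)\,\rd y
    =
    \int_{\mR} k(x,y)\,h(y)\,\nu(y)\,\rd y,
    \qquad
    k(x,y):=(\mu+x)\frac{p_{1|0}(y\mid x)}{\nu(y)},
\end{equation*}
so $G$ has kernel $k$ with respect to the standard normal weight $\nu$ of (\ref{nu}). For an operator on $\mH=L^2(\mR,\nu)$ written in this way the Hilbert--Schmidt norm is $\sn G\sn_{\rm HS}^2=\int_{\mR}\int_{\mR} k(x,y)^2\,\nu(x)\nu(y)\,\rd x\,\rd y$, and since $\sn G\sn\<\sn G\sn_{\rm HS}$ it suffices to show this double integral equals $\gamma^{-2}\big(\mu^2+(1+\rho^2)/\gamma^2\big)$. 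Here the integrand simplifies to $(\mu+x)^2\,p_{1|0}(y\mid x)^2\,\nu(y)^{-1}\,\nu(x)$.

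Next I would perform the two Gaussian integrations in turn. Integrating over $y$ first and completing the square in the exponent (using $\gamma^2=1-\rho^2$), I expect
\begin{equation*}
    \int_{\mR}\frac{p_{1|0}(y\mid x)^2}{\nu(y)}\,\rd y
    =
    \frac{1}{\gamma\sqrt{1+\rho^2}}\,\re^{\rho^2 x^2/(1+\rho^2)},
\end{equation*}
so that
\begin{equation*}
    \sn G\sn_{\rm HS}^2
    =
    \frac{1}{\gamma\sqrt{1+\rho^2}}
    \int_{\mR}(\mu+x)^2\,\re^{\rho^2 x^2/(1+\rho^2)}\,\nu(x)\,\rd x .
\end{equation*}
The remaining exponent combines to $-\gamma^2 x^2/\big(2(1+\rho^2)\big)$, i.e. a centred Gaussian of variance $s^2=(1+\rho^2)/\gamma^2$, so the outer integral is $s(\mu^2+s^2)$ up to the normalising constant; collecting the factors of $\gamma$ and $\sqrt{1+\rho^2}$ then yields $\sn G\sn_{\rm HS}^2=\gamma^{-2}\big(\mu^2+(1+\rho^2)/\gamma^2\big)$, which is precisely the square of the claimed bound.

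The integral bookkeeping is routine once the Hilbert--Schmidt route is fixed; the only points to check carefully are convergence of the two integrals — the inner one requires the coefficient of $y^2$ to remain negative, which holds because $\gamma^2<2$, and the outer one converges because $\gamma>0$ — so the estimate is valid throughout the admissible range $0<\rho<1$. I expect the main difficulty to be conceptual rather than computational: recognising that $\sn G\sn\<\sn G\sn_{\rm HS}$ is simultaneously tractable and tight enough to produce the exact expression in (\ref{Gupper}). As a by-product, finiteness of $\sn G\sn_{\rm HS}$ shows that $G$ is compact, which is consistent with the largest-eigenvalue analysis pursued in Section~\ref{sec:matrix}.
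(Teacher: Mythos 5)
Your proposal is correct and is essentially the paper's own argument: the paper's pointwise Cauchy--Bunyakovsky--Schwarz step followed by integration against $\nu$ is precisely the standard derivation of the bound $\sn G \sn \< \sn G \sn_{\rm HS}$ that you invoke, and both proofs then reduce to the same double Gaussian integral $\int_{\mR^2}(\mu+x)^2 q(x,y)^2\nu(x)\nu(y)\,\rd x\,\rd y$. The only (immaterial) difference is how that integral is evaluated --- the paper recognizes $\gamma^2 q^2 \nu\nu$ as a bivariate Gaussian density and reads off the marginal variance $(1+\rho^2)/\gamma^2$, whereas you integrate iteratively by completing the square in $y$ and then in $x$.
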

\begin{proof}
Let $h \in \mH$. Then, by employing a change of measure
technique, it follows that for any real $x$,
\begin{eqnarray}
\nonumber
    G(h)(x)
     &=&
    (\mu+x)
    \int_{\mR}
    h(y)  p_{1|0}(y\mid x) \rd y\\
\label{Gq}
    & =&
    (\mu+x)
    \int_{\mR}
    h(y)
    q(x, y)
    \nu(y)\rd y=
    (\mu+x)
    \bra
        q(x,\cdot),
        h
    \ket.
\end{eqnarray}
Here,
\begin{equation}
\label{q}
    q(x, y)
    :=
    \frac
    {p_{1\mid 0}(y \mid x)}
    {\nu(y)}
    =
    \frac
    {p_{01}(x,y)}
    {\nu(x)\nu(y)},
\end{equation}
with $p_{1|0}$ the transition PDF of the sequence $\sigma$ described
by (\ref{PDF_1|0});\, $\nu$ is the standard normal PDF from (\ref{nu}),
and
\begin{equation}
\label{p01}
    p_{01}(x,y)
    :=
    \re^{
        (2\rho xy - x^2 - y^2)/(2\gamma^2)}
        \big/(2\pi \gamma)
\end{equation}
is the joint PDF of $\sigma_0$ and $\sigma_1$. Application of the
Cauchy-Bunyakovsky-Schwarz inequality to (\ref{Gq}) yields
\begin{equation}
\label{GG}
        (G(h)(x))^2
    \<
    (\mu+x)^2
    \|q(x,\cdot)\|^2
    \|h\|^2.
\end{equation}
An upper bound for the transfer operator norm
(\ref{normG}) can now be obtained by integrating both sides of (\ref{GG}) with respect to  the standard normal distribution and dividing the result by $\|h\|^2$:
\begin{eqnarray}
\nonumber
    \sn G \sn^2
    & \< &
    \int_{\mR}
    \nu(x)
    (\mu+x)^2
    \left(
         \int_{\mR}
             (q(x,y))^2
         \nu(y)
         \rd y
    \right)
    \rd x\\
\label{Gs}
    & = &
    \int_{\mR^2}
    (\mu+x)^2
    s(x, y)
    \rd x
    \rd y.
\end{eqnarray}
Here, in view of (\ref{q}), (\ref{p01}) and (\ref{gamma}),
\begin{eqnarray}
\nonumber
    s(x,y)
     &:= &
    (q(x, y))^2 \nu(x)\nu(y)=
    \frac
    {(p_{01}(x,y))^2}
    {\nu(x)\nu(y)}\\
\label{s}
     &= &
    \frac{1}{2\pi \gamma^2}
    \exp
    \left(
        \frac
        {4\rho xy-(1+\rho^2)x^2-(1+\rho^2)y^2}
        {2\gamma^2}
    \right).
\end{eqnarray}
The function $\gamma^2 s:\mR^2\to \mR_+$ is recognizable as a bivariate
Gaussian PDF with zero mean and covariance matrix
$$
    \left(
         \frac{1}{\gamma^2}
         \begin{bmatrix}
             1+\rho^2   & - 2\rho\\
             -2\rho     & 1 + \rho^2
         \end{bmatrix}
    \right)^{-1}
    =
    \frac{1}{\gamma^2}
    \begin{bmatrix}
        1+\rho^2    &  2\rho\\
        2\rho       & 1 + \rho^2
    \end{bmatrix},
$$
whose determinant is equal to one
and the diagonal entries describe the common marginal variance
$(1+\rho^2)/\gamma^2$.  It is the latter quantity that enters the right-hand side of (\ref{Gs}) through (\ref{s}) as
$$
    \sn G\sn^2
     \<
    \frac{1}{\gamma^2}
    \int_{\mR^2}
    \gamma^2 s(x, y) (\mu+x)^2 \rd x\rd y
    =
    \frac{1}{\gamma^2}
    \left(
        \mu^2
        +
        \frac
        {1+\rho^2}
        {\gamma^2}
    \right),
$$
thus proving (\ref{Gupper}).
\end{proof}

Now, by using the Cauchy-Bunyakovski-Schwarz inequality and the submultiplicativity of the operator norm $\sn \cdot\sn$, it follows that the product moment (\ref{MN}) satisfies
\begin{equation}
\label{MNbone}
    M_N
    =
    \bra
        Q_N,
        \bone
    \ket
    \<
    \|Q_N\|
    =
    \|G^N(\bone)\|
    \<
    \sn G\sn^N
\end{equation}
for any $N\>0$,
where $\bone$ denotes the identically unit function. Hence, the MLE (\ref{muZ}) admits an upper bound
$
    \bm(\xi)
    \<
    \ln \sn G\sn
$,
which, in combination with (\ref{Gupper}) and (\ref{gamma}), yields
\begin{equation}
\label{upper}
    \bm(\xi)
    \<
    \frac{1}{2}
    \ln
    \left(
        \mu^2
        +
        \frac{1+\rho^2}{1-\rho^2}
    \right)
    \underbrace{-
    \frac{1}{2}
    \ln(1-\rho^2)}_{\bI(\sigma_0, \sigma_1)}.
\end{equation}

\begin{remark}
The last term in (\ref{upper}) is recognizable as Shannon's mutual information $\bI(\sigma_0, \sigma_1)$ between  the random variables $\sigma_0$ and $\sigma_1$ (see \cite{CT_2006,G_2009}), which, in the Markov case being considered, coincides with $\bI(\sigma_1, \sigma_{\<0})$, where $\sigma_{\< 0}:= (\sigma_k)_{k \< 0}$ is the past history of the sequence $\sigma$ up until time 0.
\end{remark}

\begin{remark}
The proof of Theorem~\ref{th:upper} shows that the inequality
(\ref{Gs}) remains valid  for any homogeneous Markov
 random sequence $\xi$, and hence, its MLE, in this more general case, admits an upper bound
\begin{equation*}
\label{mupper}
    \bm(\xi)
    \<
    \frac{1}{2}
    \ln
    \int_{\mR^2}
    \frac{(p_{01}(x,y))^2}{\nu(x)\nu(y)}
    y^2
    \rd x\rd y,
\end{equation*}
where $\nu$ and $p_{01}$ are understood as the invariant PDF of
$\xi$ and the joint PDF of $\xi_0$ and $\xi_1$, respectively.
\end{remark}

\section{Matrix representation in the basis of Hermite polynomials}\label{sec:matrix}

As an orthonormal basis of the Hilbert space $\mH$ from Section~\ref{sec:upper}, we will use the functions $h_0, h_1, h_2, \ldots$  obtained by scaling the Hermite polynomials (\ref{H}) as
\begin{equation}
\label{h}
    h_k
    :=
    H_k/\sqrt{k!}.
\end{equation}
Their significance for computing the product moment (\ref{M_N}) is that, in view of the leftmost equality from (\ref{MNbone}),
\begin{equation}
\label{Zq}
    M_N
    =
    q_{N,0},
\end{equation}
where
\begin{equation}
\label{qNk}
    q_{N,k}
    :=
    \bra
        Q_N,
        h_k
    \ket
\end{equation}
are the Fourier coefficients of the polynomial $Q_N$ with respect to
the orthonormal basis (\ref{h}), so that
\begin{equation}
\label{QB}
    Q_N
    =
    \sum_{k=0}^{N}
    q_{N,k}
    h_k.
\end{equation}
We will show that the coefficients $q_{N, 0}, \ldots, q_{N, N}$
satisfy a recurrence equation in $N \> 0$ which can be ``encoded'' in  an equivalent
matrix representation of the transfer operator $G$ defined by
(\ref{nextQ}). To this end, we will employ a lemma below which, although it follows from the properties of the
Ornstein-Uhlenbeck operator \cite[Proposition~1.5.4(v) on
p.~233]{M_1995}, is provided with a proof for completeness of exposition.

\begin{lemma}
\label{lem:EHW}
Let $X$ be a Gaussian random variable with  mean $m$ and variance $s^2 < 1$. Then, for any nonnegative integer $k$,
\begin{equation}
\label{EHW}
    \bE H_k(X)
    =
    u^k
    H_k
    (
        m
        /u
    ),
    \qquad
    u:= \sqrt{1-s^2}.
\end{equation}
\end{lemma}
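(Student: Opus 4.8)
The plan is to avoid integrating the Rodrigues formula (\ref{H}) directly and instead exploit the exponential generating function (\ref{Hermgen}) together with the Gaussian moment-generating function. First I would introduce the auxiliary function $\Phi(y) := \bE\, \re^{Xy - y^2/2}$ of a real variable $y$ and observe, from (\ref{Hermgen}), that $\d_y^k \re^{xy - y^2/2}\big|_{y=0} = H_k(x)$ for every $x$; this is simply the statement that $H_k(x)/k!$ is the $k$th Taylor coefficient of $\re^{xy - y^2/2}$ at $y = 0$. Interchanging differentiation and expectation would then give $\Phi^{(k)}(0) = \bE H_k(X)$, reducing the claim to the identification of the Taylor coefficients of $\Phi$.

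The second step is the explicit evaluation of $\Phi$. Using the Gaussian moment-generating function $\bE\, \re^{Xy} = \re^{my + s^2 y^2/2}$ for $X \sim \cN(m, s^2)$, I would obtain
$$\Phi(y) = \re^{my + s^2 y^2/2 - y^2/2} = \re^{my - u^2 y^2/2}, \qquad u := \sqrt{1-s^2},$$
where the hypothesis $s^2 < 1$ guarantees $u > 0$. The key algebraic observation is that this is again of the generating-function form: rescaling the variable in (\ref{Hermgen}) as $x \mapsto m/u$ together with $y \mapsto uy$ gives
$$\re^{my - u^2 y^2/2} = \re^{(m/u)(uy) - (uy)^2/2} = \sum_{k \geq 0} H_k(m/u)\, \frac{(uy)^k}{k!} = \sum_{k \geq 0} u^k H_k(m/u)\, \frac{y^k}{k!}.$$
Reading off the $k$th Taylor coefficient yields $\Phi^{(k)}(0) = u^k H_k(m/u)$, and combining this with $\Phi^{(k)}(0) = \bE H_k(X)$ from the first step gives exactly (\ref{EHW}).

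The one point requiring care, and the only genuine obstacle, is the interchange of $\d_y^k$ with $\bE$ at $y = 0$ (equivalently, of the infinite summation with the expectation). This is routine for Gaussian integrals: since $\bE\, \re^{|X|\, |y|} < \infty$ for every $y$, the map $y \mapsto \re^{Xy - y^2/2}$ is smooth with derivatives dominated, locally uniformly in $y$, by an integrable envelope of the form $P(|X|, |y|)\, \re^{|X|\, |y|}$ with $P$ a polynomial, so dominated convergence justifies differentiating under the expectation sign. Alternatively, one may simply note that both $\Phi(y)$ and $\re^{my - u^2 y^2/2}$ are entire functions of $y$ that coincide on $\mR$, whence their power-series coefficients agree term by term; this sidesteps any convergence subtlety and makes the matching of coefficients rigorous.
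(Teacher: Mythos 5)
Your proof is correct and follows essentially the same route as the paper: both combine the Hermite generating function (\ref{Hermgen}) with the Gaussian moment-generating function to obtain $\bE\,\re^{Xy-y^2/2}=\re^{my-u^2y^2/2}$, then rewrite this as $\sum_{k\>0}u^kH_k(m/u)\,y^k/k!$ and match coefficients. The only difference is cosmetic --- you phrase the matching via Taylor coefficients of $\Phi$ and explicitly justify the interchange of expectation with differentiation/summation, a step the paper leaves implicit.
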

\begin{proof}
By combining the generating function (\ref{Hermgen}) of Hermite polynomials with the mo\-ment-generating function $\bE\re^{Xy }= \re^{m y+ s^2y^2/2}$ of the Gaussian
distribution $\cN(m,s^2)$, it follows that
$$
    \sum_{k = 0}^{+\infty}
        \frac
    {y^k}
    {k!}
    \bE
    H_k(X)
    =
    \re^{-y^2/2}
    \bE
        \re^{
            Xy
        }
     =
    \re^{
        m y - (1-s^2)y^2/2}
     =
    \sum_{k=0}^{+\infty}
    \frac{(uy)^k}{k!}
        H_k
    (
        m/u
    ),
$$
which holds for all $y$. Comparison of the leftmost and rightmost
sides of this identity yields (\ref{EHW}).
\end{proof}

Now, let $R$, $A$ and $A^{\dagger}$ be linear operators on the Hilbert space $\mH$ which  act on the basis functions
(\ref{h}) as
\begin{equation}
\label{RAA}
    R(h_k)
    :=
    \rho^k h_k,
    \qquad
    A(h_k)
    :=
    \sqrt{k}\,h_{k-1},
    \qquad
    A^{\dagger}(h_k)
    :=
    \sqrt{k+1}\,
    h_{k+1}.
\end{equation}
Both $A$ and $A^{\dagger}$ are defined (and are mutually adjoint) on the linear manifold of those functions $h\in \mH$ whose Fourier coefficients in the basis (\ref{h}) are square summable with an appropriate weight: $\sum_{k\>1}k \bra h, h_k\ket^2 <+\infty$.
The action of the operators $A$ and $A^{\dagger}$ in (\ref{RAA}) on the basis functions is identical to that of the
annihilation and creation operators on the eigenfunctions of
the Schr\"{o}dinger equation for the
quantum harmonic oscillator; see, for example,  \cite[pp. 52--53]{Meyer_1995} and \cite[p.~91]{S_1994}. In this sense,  $A+A^{\dagger}$ (which is a symmetric operator) corresponds to the quantum-mechanical position operator \cite[pp. 210--211]{J_1997}.
Although $A$ and $A^{\dagger}$ are unbounded,  their compositions $AR$ and $A^{\dagger}R$ with the operator $R$, as well as $R$ itself,  are bounded operators on the whole space $\mH$, since $0<\rho<1$ in view of (\ref{rho}).


\begin{theorem}
\label{th:GRAA} The transfer operator $G$ in (\ref{nextQ}) can be expressed in terms
of the operators $R$, $A$ and $A^{\dagger}$ from (\ref{RAA}), and its matrix representation in the
basis (\ref{h}) is given by
\begin{equation}
\label{GRAA}
    G
     =
     (\mu I
     +
     A + A^{\dagger})R
     =
    \begin{bmatrix}
    \mu     & \rho          & 0                 & 0                 & 0             & 0     & \ast\\
    1       & \mu\rho       & \sqrt{2}\rho^2    & 0                 & 0             & 0     & \ast\\
    0       & \sqrt{2}\rho  & \mu\rho^2         & \sqrt{3}\rho^3    & 0             & 0     & \ast\\
    0       &   0           & \sqrt{3}\rho^2    & \mu\rho^3         & \sqrt{4}\rho^4 & 0    & \ast\\
    0       & 0             &   0               & \sqrt{4}\rho^3    & \mu\rho^4     & \sqrt{5}\rho^5  & \ast\\
    0                   & 0 & 0   &   0   &    \sqrt{5}\rho^4    & \mu\rho^5      & \ast\\
    \ast                & \ast&\ast & \ast & \ast&\ast & \ast
    \end{bmatrix}.
\end{equation}
\end{theorem}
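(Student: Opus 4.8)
The plan is to determine the action of $G$ on each basis function $h_k$ from (\ref{h}) directly from the definition (\ref{nextQ}), and thereby obtain both the operator factorization and the matrix (\ref{GRAA}) at once. First I would isolate the conditional-expectation part of (\ref{nextQ}). Since the transition law (\ref{1|0}) gives $\sigma_1\mid\sigma_0=x\sim\cN(\rho x,\gamma^2)$ with $\gamma^2=1-\rho^2$ by (\ref{gamma}), Lemma~\ref{lem:EHW} applies with mean $m=\rho x$ and variance $s^2=\gamma^2$, so that its parameter becomes $u=\sqrt{1-\gamma^2}=\rho$. Consequently
\begin{equation*}
    \bE(H_k(\sigma_1)\mid\sigma_0=x)
    =
    \rho^k H_k(\rho x/\rho)
    =
    \rho^k H_k(x),
\end{equation*}
and dividing by $\sqrt{k!}$ yields $\bE(h_k(\sigma_1)\mid\sigma_0=x)=\rho^k h_k(x)=R(h_k)(x)$. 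In other words, the conditional-expectation (Mehler) operator appearing in (\ref{nextQ}) is diagonalized by the Hermite basis and coincides with $R$ from (\ref{RAA}); this identification is the crux of the argument.

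It then remains to incorporate the prefactor $(\mu+\sigma_0)$. The operator of multiplication by the independent variable acts on the Hermite basis through the three-term recurrence $xH_k=H_{k+1}+kH_{k-1}$, which I would derive by differentiating the generating function (\ref{Hermgen}) in $y$ and matching coefficients of $y^k/k!$. In the normalization (\ref{h}) this reads $x h_k=\sqrt{k+1}\,h_{k+1}+\sqrt{k}\,h_{k-1}$, so that multiplication by $x$ is precisely $A+A^{\dagger}$ with $A,A^{\dagger}$ as in (\ref{RAA}). Combining the two steps,
\begin{equation*}
    G(h_k)(x)
    =
    (\mu+x)\rho^k h_k(x)
    =
    \rho^k\bigl(\mu h_k(x)+\sqrt{k}\,h_{k-1}(x)+\sqrt{k+1}\,h_{k+1}(x)\bigr),
\end{equation*}
which is exactly $(\mu I+A+A^{\dagger})R(h_k)$. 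As the $h_k$ span the dense domain on which $A$ and $A^{\dagger}$ are defined, this columnwise identity upgrades to the operator factorization $G=(\mu I+A+A^{\dagger})R$.

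Finally, the matrix (\ref{GRAA}) is obtained by recording the Fourier coefficients of $G(h_k)$: the coefficient of $h_k$ is $\mu\rho^k$ (diagonal entry), that of $h_{k+1}$ is $\rho^k\sqrt{k+1}$ (subdiagonal entry in row $k+1$, column $k$), and that of $h_{k-1}$ is $\rho^k\sqrt{k}$ (superdiagonal entry in row $k-1$, column $k$), which matches the displayed tridiagonal array term by term. I expect no genuine obstacle beyond careful bookkeeping of the $\sqrt{k!}$ normalization: the one substantive point is the identification $u=\rho$, which forces the conditional expectation to act as $R$, and the only analytic caveat is the unboundedness of $A$ and $A^{\dagger}$ — harmless here because the identity is checked on each polynomial $h_k$, all of which lie in their common domain.
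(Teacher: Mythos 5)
Your proof is correct and follows essentially the same route as the paper's: both apply Lemma~\ref{lem:EHW} with $m=\rho\sigma_0$, $s=\gamma$ (so $u=\rho$) to show the conditional expectation acts as $R$ on Hermite polynomials, then use the three-term recurrence $xH_k=H_{k+1}+kH_{k-1}$ and the $\sqrt{k!}$ normalization to obtain $G(h_k)=\rho^k(\mu h_k+\sqrt{k}\,h_{k-1}+\sqrt{k+1}\,h_{k+1})$ and read off the columns of (\ref{GRAA}). Your only departure is expository --- factoring $G$ explicitly as (multiplication by $\mu+x$) composed with the Mehler operator before assembling, rather than computing $G(H_k)$ in a single chain of equalities --- which changes nothing of substance.
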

\begin{proof}
In view of (\ref{1|0}), application of Lemma~\ref{lem:EHW} with
$m := \rho\sigma_0$ and $s := \gamma$ from (\ref{gamma}) yields the identity
\begin{equation*}
\label{martingale}
\bE
    (
        H_k(\sigma_1)
        \mid
        \sigma_0
    )
    =
    \rho^k
    H_k(\sigma_0),
\end{equation*}
by which the random sequence
$(\rho^{-jk} H_k(\sigma_j))_{j\> 0}$ is a
martingale  with respect to the natural filtration of the homogeneous Gaussian Markov chain $\sigma$ for any given $k\> 0$.
Hence, the action of the transfer operator $G$ on the $k$th Hermite polynomial is
described by
\begin{eqnarray}
\nonumber
    G(H_k)(\sigma_0)
    & = &
    (\mu+\sigma_0)
    \bE
    (
        H_k(\sigma_1)
        \mid
        \sigma_0
    )
    =
    \rho^k
    (\mu+\sigma_0)
    H_k(\sigma_0)
\\
\label{GH}
    & =&
    \rho^k
    (
        \mu H_k(\sigma_0)
        +
        kH_{k-1}(\sigma_0)
        +
        H_{k+1}(\sigma_0)
    ),
\end{eqnarray}
where we have also used the recurrence relation $
    H_{k+1}(x)
    =
    xH_k(x)
    -k
    H_{k-1}(x)
$. Upon division of both parts of (\ref{GH}) by $\sqrt{k!}$, the result is representable in
terms of the basis functions (\ref{h}) and the operators (\ref{RAA}) as
\begin{eqnarray}
\nonumber
    G(h_k)
    & = &
    \rho^k
    (
        \mu h_k
        +
        \sqrt{k}\, h_{k-1}
        +
        \sqrt{k+1}\, h_{k+1}
    )\\
\label{GB}
    & = &
    \mu R(h_k)
    +
        A(R(h_k))
        +
        A^{\dagger}(R(h_k)).
\end{eqnarray}
Since the last relation holds for any $k \> 0$, the transfer operator $G$ is indeed expressed
in terms of $R$, $A$ and $A^{\dagger}$ as in (\ref{GRAA}). The first of the equalities (\ref{GB}) yields the $k$th column of the matrix representation of $G$ in the basis (\ref{h}) on the right-hand side of (\ref{GRAA}).
\end{proof}


Since $0< \rho<1$,  the matrix representation (\ref{GRAA}) implies that $G$ is a compact operator. With $\sqrt{R}$ denoting the square root of the operator $R$ from (\ref{RAA}) defined by $\sqrt{R}(h_k) = \rho^{k/2} h_k$,  the spectrum of $G$ coincides with that of a self-adjoint operator $\sqrt{R} (\mu I + A+ A^{\dagger}) \sqrt{R}$ and is all  real. Moreover,  since $\mu>0$,  the convex cone $\mH_+$  of functions $h\in \mH$ with all nonnegative Fourier coefficients  $\bra h,h_k\ket$ with respect to the orthonormal basis (\ref{h}) is invariant under $G$. Hence, by the Ruelle-Perron-Frobenius theorem, the operator $G$ has a unique (up to a positive scalar factor) eigenfunction
\begin{equation}
\label{Qeigen}
    Q(x)
    :=
    \sum_{k=0}^{+\infty}
    q_k h_k(x)
\end{equation}
in $\mH_+$,
which corresponds to the largest eigenvalue $\lambda$ of $G$.
By applying (\ref{GRAA}) to the Fourier coefficients (\ref{qNk}) of the polynomial $Q_N$, it follows that for any $N\> 0$,  they satisfy the recurrence equation
\begin{equation}
\label{qnext}
    q_{N+1,k}
    =
    \sqrt{k}\rho^{k-1}q_{N,k-1} + \mu \rho^k q_{N,k} + \sqrt{k+1} \rho^{k+1} q_{N,k+1},
    \qquad
    0\< k \< N+1.
\end{equation}
Here, $q_{N,k}=0$ for $k>N$ and $k<0$ in view of (\ref{QB}), and the recursion is initialized by $q_{0,0} = 1$ and $q_{0,k}=0$ for any $k\ne 0$. Since the cone $\mH_+$ is invariant under $G$, a straightforward induction yields the nonnegativeness $q_{N,k}\> 0$. Furthermore, the ratio of the free terms (\ref{Zq}) of the polynomials $Q_N$ and $Q_{N+1}$ converges to the largest eigenvalue of the transfer operator $G$ as
\begin{equation}
\label{rat}
    \lim_{N\to +\infty}
    \frac{q_{N+1,0}}{q_{N,0}}
    =
    \lambda,
\end{equation}
which can be used for finding this eigenvalue numerically  as it is done in the power method. The dependence of the MLE $\bm(\xi)=\ln \lambda$ on $\mu$ and $\rho$, obtained by numerical computation of the eigenvalue $\lambda$, is presented in Fig.~\ref{fig:mle_surf}.
\begin{figure}[htbp]
\vskip-3cm
\centering
    \includegraphics[width=11cm]{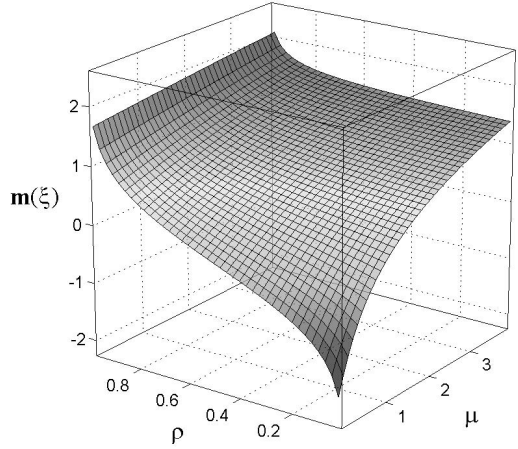}
\vskip-4cm
  \caption{
    The dependence of the MLE $\bm(\xi)$ of the auxiliary autoregressive Gaussian random sequence $\xi$ for the one-dimensional Manhattan EDM system on the parameters $\mu$ and $\rho$ from (\ref{mu}) and (\ref{rho}). Note that $\bm(\xi) \to \ln \mu$ as $\rho\to 0+$, and $\bm(\xi)\to +\infty$ as $\rho\to 1-$. Another limit,  $\lim_{\mu\to 0+}\bm(\xi)$, which is a function of $\rho$, is unknown.
    }
    \label{fig:mle_surf}
\end{figure}

\section{Connection with a pantograph equation eigenvalue problem}\label{sec:pantograph}

We will now discuss an alternative representation for the eigenvalue $\lambda$ from (\ref{rat}). Consider an exponential type generating function of the Fourier coefficients (\ref{qNk}):
\begin{eqnarray}
\nonumber
    T_N(t)
    &:= &
    \sum_{k=0}^{N}
    q_{N,k} \frac{t^k}{\sqrt{k!}}
    =
    \sum_{k\>0}
    \bra
        Q_N,h_k
    \ket
    \frac{t^k}{\sqrt{k!}}    \\
\label{TN}
    &=&
    \Bra
        Q_N,
        \sum_{k\>0}
        \frac{t^k}{k!} H_k(\cdot)
    \Ket
    =
    \re^{-t^2/2}
    \bE(Q_N(X)\re^{tX}),
\end{eqnarray}
where the second sum consists of finitely many nonzero terms (with $k\< N$), and $X$ is a standard normal random variable. Here, use has also been made of the generating function of Hermite polynomials (\ref{Hermgen}). The definition of $T_N$ implies that it is a polynomial of degree $N$.

\begin{theorem}
\label{the:Tnext}
The polynomials $T_N$, defined by (\ref{TN}), satisfy the recurrence equation
\begin{eqnarray}
\nonumber
    T_{N+1}(t)
    &=&
    (\mu+t) T_N(\rho t)
    +
    \rho T_N'(\rho t)\\
\label{Tnext}
    &=&
    \re^{-(\mu+t)^2/2}
    \d_t
    (
        T_N(\rho t)
        \re^{(\mu+t)^2/2}
    )
    =:
    K(T_N)(t)
\end{eqnarray}
\end{theorem}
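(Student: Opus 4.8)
The plan is to bypass the integral representation of $T_{N+1}$ and instead work directly from its power-series definition, feeding in the scalar recurrence (\ref{qnext}) for the Fourier coefficients, which is already at our disposal via the matrix representation of $G$ in Theorem~\ref{th:GRAA}. Starting from $T_{N+1}(t) = \sum_{k\geq 0} q_{N+1,k}\, t^k/\sqrt{k!}$ and substituting the three-term relation (\ref{qnext}) for $q_{N+1,k}$, I would split the result into the three sums carrying the coefficients $q_{N,k-1}$, $q_{N,k}$ and $q_{N,k+1}$, and re-index each one independently. All sums are finite (at most $N+2$ nonzero terms), so these rearrangements and any termwise differentiation are unconditionally justified.

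First I would treat the $\sqrt{k}\rho^{k-1}q_{N,k-1}$ contribution: using the identity $\sqrt{k}/\sqrt{k!} = 1/\sqrt{(k-1)!}$ and the shift $j=k-1$, this sum becomes $\sum_{j\geq 0} \rho^j q_{N,j}\, t^{j+1}/\sqrt{j!} = t\sum_{j\geq 0} q_{N,j}(\rho t)^j/\sqrt{j!} = t\,T_N(\rho t)$, since $\rho^j t^{j+1}=t(\rho t)^j$. The middle contribution is immediate: $\sum_{k}\mu\rho^k q_{N,k}\,t^k/\sqrt{k!} = \mu\sum_k q_{N,k}(\rho t)^k/\sqrt{k!} = \mu\,T_N(\rho t)$. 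For the $\sqrt{k+1}\rho^{k+1}q_{N,k+1}$ contribution, the identity $\sqrt{k+1}/\sqrt{k!} = (k+1)/\sqrt{(k+1)!}$ together with the shift $j=k+1$ yields $\rho\sum_{j\geq 1} q_{N,j}\, j(\rho t)^{j-1}/\sqrt{j!}$, which is precisely $\rho\,T_N'(\rho t)$ by termwise differentiation of the series for $T_N$, the external factor $\rho$ being the chain-rule derivative of the argument $\rho t$. Summing the three pieces gives the first equality $T_{N+1}(t) = (\mu+t)T_N(\rho t) + \rho T_N'(\rho t)$. The second equality in (\ref{Tnext}) is then a one-line check: setting $g(t):=T_N(\rho t)$, the product rule gives $\d_t\!\left(g(t)\re^{(\mu+t)^2/2}\right) = \left(g'(t)+(\mu+t)g(t)\right)\re^{(\mu+t)^2/2}$, and since $g'(t)=\rho T_N'(\rho t)$, multiplication by $\re^{-(\mu+t)^2/2}$ recovers $(\mu+t)T_N(\rho t)+\rho T_N'(\rho t)$. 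This identifies the operator $K$ and exhibits the scaling $t\mapsto \rho t$ of the argument that is the defining feature of the pantograph functional-differential equation.

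I do not anticipate a genuine obstacle here; the argument is essentially bookkeeping with exponential generating functions. The only points demanding care are the two factorial identities $\sqrt{k}/\sqrt{k!}=1/\sqrt{(k-1)!}$ and $\sqrt{k+1}/\sqrt{k!}=(k+1)/\sqrt{(k+1)!}$ that make the index shifts collapse cleanly, and the correct attribution of the chain-rule factor $\rho$ to the superdiagonal term, so that it assembles into $\rho\,T_N'(\rho t)$ rather than $T_N'(\rho t)$. It is also worth recording that the boundary conventions $q_{N,k}=0$ for $k<0$ and $k>N$ keep the shifted sums well defined, so that no spurious terms are introduced at the endpoints.
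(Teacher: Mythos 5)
Your proof is correct, but it takes exactly the route the paper's own proof explicitly declines: the paper remarks that ``the relation (\ref{Tnext}) follows from (\ref{qnext})'' and then deliberately derives it ``in a more instructive fashion using a change of measure.'' Your argument runs the acknowledged-but-skipped derivation in full: substitute the three-term coefficient recurrence (\ref{qnext}) into the series defining $T_{N+1}$, re-index the three resulting finite sums, and identify them as $t\,T_N(\rho t)$, $\mu\,T_N(\rho t)$ and $\rho\,T_N'(\rho t)$; your factorial identities and index bookkeeping are all right, and the final product-rule check of the second equality in (\ref{Tnext}) is also correct. The paper instead works from the probabilistic representation $T_N(t)=\re^{-t^2/2}\,\bE\bigl(Q_N(X)\re^{tX}\bigr)$, applies the recurrence (\ref{nextQ}) for $Q_N$ together with the tower property, and exploits the reversibility of the Gaussian Markov chain ($\[[[\sigma_0\mid\sigma_1\]]]=\cN(\rho\sigma_1,\gamma^2)$) and the conditional moment-generating function $\theta_t(\sigma_1)$ to pull out the operator $(\mu+\d_t)$ acting on $\re^{t^2/2}T_N(\rho t)$. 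The trade-off is clear: your argument is shorter, purely algebraic and unconditionally rigorous (all sums are finite), but it inherits its force entirely from (\ref{qnext}), i.e.\ from Theorem~\ref{th:GRAA} and the Hermite three-term recurrence; the paper's change-of-measure derivation is independent of the matrix representation and exposes the probabilistic mechanism --- reversibility plus measure change --- that explains structurally why a differentiation-and-scaling (pantograph-type) operator $K$ emerges, which is the theme the paper emphasizes in its concluding remarks. Both are complete proofs; yours is the bookkeeping verification, the paper's is the conceptual one.
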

\begin{proof}
Although the relation (\ref{Tnext}) follows from (\ref{qnext}), we will derive it in a more instructive fashion using a change of measure. By combining the definition (\ref{TN}) with the recurrence relation (\ref{nextQ}) and repeatedly using the tower property  of iterated conditional expectations, it follows that
\begin{align}
\nonumber
    T_{N+1}(t)
    & =
    \re^{-t^2/2}
    \bE(Q_{N+1}(\sigma_0)\re^{t\sigma_0})\\
\nonumber
    & =
    \re^{-t^2/2}
    \bE((\mu + \sigma_0)\re^{t\sigma_0}\bE(Q_N(\sigma_1)\mid \sigma_0))
    =
    \re^{-t^2/2}
    \bE((\mu + \sigma_0)\re^{t\sigma_0} Q_N(\sigma_1))\\
\label{Tnext1}
    & =
    \re^{-t^2/2}
    \bE(Q_N(\sigma_1)\bE((\mu + \sigma_0)\re^{t\sigma_0}\mid \sigma_1) ).
\end{align}
Since the Gaussian Markov chain $\sigma$ is reversible, the conditional probability law of $\sigma_0$, given $\sigma_1$, is obtained  from $\[[[\sigma_1\mid \sigma_0\]]]$ by swapping $\sigma_0$ and $\sigma_1$, so that
$
    \[[[\sigma_0\mid \sigma_1\]]]
    =
    \cN(\rho \sigma_1, \gamma^2)
$, with $\gamma$ given by (\ref{gamma}). Hence,
the conditional moment-generating function of $\sigma_0$ is
\begin{equation}
\label{theta}
    \theta_t(\sigma_1)
    :=
    \bE(\re^{t\sigma_0} \mid \sigma_1)
    =
    \re^{t\rho\sigma_1 + t^2 \gamma^2/2}.
\end{equation}
Now, since
\begin{equation}
\label{thetader}
    \bE((\mu + \sigma_0)\re^{t\sigma_0}\mid \sigma_1)
    =
    (\mu +\d_t)\theta_t(\sigma_1),
\end{equation}
then substitution of (\ref{theta}) and (\ref{thetader}) into the right-hand side of (\ref{Tnext1}) represents the latter equation as
\begin{eqnarray}
\nonumber
    T_{N+1}(t)
    & =&
    \re^{-t^2/2}
    (\mu +\d_t)
    \bE(Q_N(\sigma_1)\theta_t(\sigma_1))\\
\nonumber
    & =&
    \re^{-t^2/2}
    (\mu +\d_t)
    (
        \re^{t^2/2}
        \re^{-(\rho t)^2/2}
        \bE(Q_N(\sigma_1)\re^{t\rho \sigma_1})
    )\\
\label{Tnext2}
    & =&
    \re^{-t^2/2}
    (\mu +\d_t)
    (
        \re^{t^2/2}
        T_N(\rho t)
    )
    =
    (\mu+t) T_N(\rho t)
    +
    \rho T_N'(\rho t),
\end{eqnarray}
thus establishing (\ref{Tnext}). In (\ref{Tnext2}), use has again been made of  (\ref{gamma}) and (\ref{TN}).
\end{proof}

The linear operator $K$, defined by (\ref{Tnext}), is the composition of a linear first order  differential operator with an argument scaling  operator. Since $K$ consists in  differentiation and scaling,
it is easier to iterate than the integral operator $G$ in (\ref{nextQ}) which involves conditional expectation. With the initial condition $T_0\equiv 1$,  the subsequent three of the functions $T_N$ are computed as
\begin{eqnarray*}
    T_1(t) & =& \mu + t,\\
    T_2(t) & =& (\mu +t) (\mu+\rho t) + \rho,\\ 
    T_3(t) & =& (\mu +t) ((\mu +\rho t) (\mu+\rho^2 t) + \rho) + \rho (\mu+\rho^2 t + \rho(\mu+\rho t)).
\end{eqnarray*}
By the relation $T_{N+1}(0) = \mu T_N(0) + \rho T_N'(0)$, which follows from (\ref{Tnext}), the limit (\ref{rat}) is representable as
$
    \lambda
    =
    \lim_{N\to +\infty}
    (T_{N+1}(0)/T_N(0))
    =
    \mu + \rho \lim_{N\to +\infty}(T_N'(0)/T_N(0))
$.
Here, the rightmost ratio, which is the logarithmic derivative $(\ln T_N(t))'|_{t=0}$, is equal to $q_{N,1}/q_{N,0}$.
The generating function of the Fourier coefficients of the eigenfunction (\ref{Qeigen}), defined by
\begin{equation}
\label{T}
    T(t)
    :=
    \sum_{k=0}^{+\infty}
    q_k \frac{t^k}{\sqrt{k!}},
\end{equation}
is a solution of the eigenvalue problem  $K(T) = \lambda T$  for the  functional-differential operator $K$, that is,
$
    (\mu+t) T(\rho t)
    +
    \rho T'(\rho t)
    =
    \lambda T(t)
$.
Upon introducing a rescaled independent variable   $\tau:= \rho t$, the eigenvalue problem takes the form
\begin{equation}
\label{pant}
    T'(\tau)
    =
    \frac{\lambda}{\rho} T(\tau/\rho)
    -\frac{\mu+\tau/\rho}{\rho}
    T(\tau),
\end{equation}
which is a pantograph equation \cite{BDMO_2007,D_1990,KM_1971,M_1940,S_1995} with a variable coefficient. The presence of the independent variable $\tau$ together with its scaling $\tau/\rho$ as arguments of the unknown function $T$ makes this functional-differential equation hard to solve, let alone finding $\lambda$ as the largest eigenvalue for which the pantograph equation (\ref{pant}) has a feasible solution (\ref{T}).
\section{Concluding remarks}\label{sec:conclusion}

We  have established a novel connection between the product moments and moment Lyapunov exponents of homogeneous Gaussian random fields on a multidimensional integer lattice with the partition function of an elastic dimer-monomer system which extends the classical monomer-dimer problem of equilibrium statistical mechanics by allowing for long-range interactions. A salient feature of the  auxiliary GRF,  which encodes the thermodynamic properties of the EDM system, is that  its mean value and the covariance function are the Boltzmann factors, associated  with  the energetics of the underlying particle system, and,  therefore, are  both nonnegative. Any insight into how the MLE of such a GRF depends on its spectral density function in an arbitrary dimension would shed light on the solution of the EDM problem, including its particular case, the classical monomer-dimer problem,  which remains unsolved in dimensions three and higher.

To this end, we have outlined a research paradigm which builds on the observation that a specific spatial Markov structure of the GRF, present, for example,  in Gaussian Pickard random fields,  facilitates recursive representation of the product moments of such GRFs. We have discussed the possibility to recursively compute the product moments with a local  transition from one level set to another,  for a Gaussian PRF, which is the auxiliary GRF for the Manhattan EDM system.
This  paradoxical ``localization'' of the effect of long-range interactions  suggests that the product moment approach, proposed in the present study, is a promising resource towards the solution of the monomer-dimer problem and its generalizations.

In addition to its links with the statistical mechanical lattice models of interacting particle systems, the problem of computing the product moments and MLEs for homogeneous GRFs
appears to be of interest in its own right from the probability theoretic point of view.  In this regard, even the deceptively simple  case of univariate Gaussian Markov chains reveals rich algebraic and probabilistic structures. In particular, we have linked  the product moments and MLEs in this case to the ladder operators of the quantum harmonic oscillator and the pantograph functional-differential equation by using a change of measure technique.

\section*{Acknowledgments}

The author  thanks Professor Nikolai N.
Leonenko for helpful discussions of the results of the paper. Support of the Australian Research Council is also gratefully acknowledged.



\begin{thebibliography}{99}
%
%
%
%
%
%
%
%

\bibitem{ALS_2008}
\newblock V.V.Anh, N.N.Leonenko and N.R.Shieh,
\newblock \emph{Multifractality of products of geometric Ornstein-Uhlenbeck type processes},
\newblock Adv. Appl. Prob., \textbf{40}--4 (2008), 1129--1156.

\bibitem{B_1981}
\newblock R.J.Baxter,
\newblock \emph{Corner transfer matrices},
\newblock Physica A, 
\newblock \textbf{106}--1/2 (1981), 18--27.

\bibitem{B_1982}
\newblock R.J.Baxter,
\newblock ``Exactly Solved Models in Statistical Mechanics'',
\newblock Academic Press, London,  1982.




\bibitem{B_1974}
\newblock J.E.Besag,
\newblock \emph{Spatial interaction and statistical analysis of lattice
systems (with discussion)}, J. Roy. Statist. Soc.,
Series B, \textbf{36}--2 (1974), 192--236.

\bibitem{BDMO_2007}
  \newblock L.Bogachev, G.Derfel, S.Molchanov, and J.Ockendon,
  \newblock \emph{On bounded solutions of the balanced generalized pantograph equation},
\newblock
\arXiv{math/0703897v2 [math.PR] 29 Mar 2007}.



\bibitem{CIG_1998}
\newblock F.Champagnat, J.Idier and Y.Goussard,
\newblock \emph{Stationary Markov random
fields on a finite rectangular lattice},
\newblock IEEE Trans. Inform. Theory, \textbf{44}--7 (1998), 2901--2916.


\bibitem{CI_2000}
\newblock F.Champagnat and J.Idier,
\newblock \emph{On the correlation structure of
unilateral AR processes on the plane},
\newblock Adv. Appl. Prob., \textbf{32}--2 (2000), 408--425.
\bibitem{CD_1998}
\newblock N.Cressie and J.Davidson,
\newblock \emph{Image analysis with partially ordered Markov models},
\newblock Comput. Stat. Data Anal., \textbf{29}--1 (1998), 1--26.
\bibitem{CT_2006}
\newblock T.M.Cover and J.A.Thomas,
\newblock ``Elements of Information Theory'',
\newblock Wiley, Hoboken, New Jersey, 2006.
\bibitem{D_1990}
\newblock G.A.Derfel,
\newblock \emph{Probabilistic method for a class of functional-differential equations},
\newblock Ukr. Math. J., \textbf{41}--10 (1990), 1137--1141.
\bibitem{E_1984}
\newblock V.Elser,
\newblock \emph{Solution of the dimer problem on a hexagonal
lattice with boundary},
\newblock J. Phys. A: Math. Gen., \textbf{17} (1984),
1509--1513.


\bibitem{F_1961}
\newblock M.E.Fisher,
\newblock \emph{Statistical mechanics of dimers on a plane lattice},
\newblock Phys. Rev., \textbf{124}--6 (1961), 1664--1672.


\bibitem{FT_1961}
\newblock M.Fisher and H.Temperley,
\newblock \emph{The dimer problem in statistical mechanics --- an exact result},
\newblock Phil. Mag., \textbf{6} (1961), 1061--1063.


\bibitem{F_1995}
\newblock U.Frisch,
\newblock ``Turbulence: the legacy of A.N.Kolmogorov'',
\newblock Cambridge University Press, Cambridge, 1995.


\bibitem{GS_2004}
\newblock I.I.Gikhman, and A.V.Skorokhod,
\newblock ``The Theory of Stochastic Processes'',
\newblock Springer, Berlin, 2004.
\bibitem{G_1989}
\newblock J.K.Goutsias,
\newblock \emph{Mutually compatible Gibbs random fields},
\newblock IEEE Trans. Inform. Theory, \textbf{35}--6 (1989), 1233--1249.
\bibitem{G_2009}
\newblock R.M.Gray,
\newblock ``Entropy and Information Theory'',
\newblock Springer-Verlag, New York,  2009.
\bibitem{HP_1994}
\newblock R.Hayn and V.N.Plechko,
\newblock \emph{Grassmann variable analysis for
dimer problems in two dimensions},
\newblock J. Phys. A: Math. Gen., \textbf{27}--14 (1994), 4753--4760.
\bibitem{HJ_2007}
 \newblock R.A.Horn and C.R.Johnson,
 \newblock ``Matrix Analysis'',
 \newblock Cambridge University Press, New York, 2007.

\bibitem{HM_1970}
\newblock J.M.Hammersley and V.V.Menon,
\newblock \emph{A lower bound for the
monomer-dimer problem},
\newblock IMA J. Appl. Maths., \textbf{6}--4 (1970), 341--364.




\bibitem{H_1987}
\newblock K.Huang,
\newblock ``Statistical Mechanics'', 2nd ed.,
\newblock John Wiley \& Sons, New York, 1987.


\bibitem{IG_1999}
\newblock J.Idier and Y.Goussard,
\newblock  ``Champs de Pickard tridimensionnels'',
\newblock  Tech. Rep., IGB/GPI-LSS, 1999,
\newblock  \url{http://www.irccyn.ec-nantes.fr/~idier/pub/idier99d.pdf}.

\bibitem{I_1918}
\newblock L.Isserlis,
     \newblock \emph{On a formula for the product-moment coefficient of any order of a normal frequency
distribution in any number of variables},
     \newblock Biometrika, \textbf{12} (1918), 134--139.

\bibitem{J_1997}
\newblock S.Janson,
\newblock ``Gaussian Hilbert Spaces'',
\newblock Cambridge University Press, Cambridge, 1997.

\bibitem{K_1961}
\newblock P.W.Kasteleyn,
     \newblock \emph{The statistics of dimers on a lattice I. The number of dimer arrangements on a quadratic lattice},
     \newblock Physica, \textbf{27} (1961), 1209--1225.

\bibitem{KM_1971}
\newblock T.Kato and J.B.McLeod,
\newblock \emph{The functional-differential equation $y'(x) = ay(\lambda x) + by(x)$},
\newblock Bull. Amer. Math. Soc., \textbf{77}--6 (1971), 891--937.
\bibitem{KRS_1996}
\newblock C.Kenyon, D.Randall and A.Sinclair,
\newblock \emph{Approximating the number of monomer-dimer
coverings of a lattice},
\newblock J. Stat. Phys., \textbf{83}--3/4 (1996), 637--659.
\bibitem{KKPV_1997}
\newblock V.Kozyakin, N.Kuznetsov, A.Pokrovskii and I.Vladimirov,
\newblock \emph{Some problems in analysis of discretizations of
continuous dynamical systems},
\newblock Nonlin. Anal., Theor. Meth. Appl., \textbf{30}--2 (1997), 767--778.



\bibitem{L_1967}
\newblock E.H.Lieb,
\newblock \emph{Solution of the dimer problem by the transfer
matrix method},
\newblock J. Math. Phys., \textbf{8}--12, 2339--2341.



\bibitem{L_2002}
\newblock M.Loebl,
\newblock \emph{On the dimer problem and the Ising problem in finite 3-dimensional lattices},
\newblock Electr. J.  Combinator., \textbf{9}--30 (2002), 1--25.

\bibitem{M_1940}
\newblock K.Mahler,
\newblock \emph{On a special functional equation},
\newblock J. London Math. Soc. (1940) s1-15(2), 115--123.

\bibitem{M_1995}
\newblock P.Malliavin, 
\newblock ``Integration and Probability'',
\newblock Springer-Verlag, New York, 1995.


\bibitem{M_1997}
\newblock P.Malliavin,
\newblock ``Stochastic Analysis'',
\newblock Springer, Berlin, 1997.
\bibitem{ME_1981}
\newblock N.F.G.Martin and J.W.England,
\newblock ``Mathematical Theory of Entropy'',
\newblock Addison-Wesley, Reading, Mass., 1981.

\bibitem{Meyer_1995}
\newblock P.-A.Meyer,
\newblock ``Quantum Probability for Probabilists''.
\newblock Springer, Berlin, 1995.

\bibitem{MSW_2007}
\newblock U.U.M\"{u}ller, A.Schick and W.Wefelmeyer,
\newblock \emph{Inference for alternating time series},
\newblock In: Recent Advances in Stochastic Modeling and Data Analysis (C. H. Skiadas, ed.),
\newblock World Scientific, Singapore, 2007, 589--596.
\bibitem{P_1977}
\newblock D.K.Pickard,
\newblock \emph{A curious binary lattice process},
\newblock J. Appl. Prob., \textbf{14} (1977), 717--731.


\bibitem{P_1980}
\newblock D.K.Pickard,
\newblock \emph{Unilateral Markov fields},
\newblock Adv. Appl. Prob., \textbf{12} (1980), 655--671.


\bibitem{PKM_2000}
\newblock A.V.Pokrovskii, A.J.Kent and J.G.McInerney,
\newblock \emph{Mixed moments of random mappings and chaotic dynamical systems},
\newblock Proc. R. Soc. Lond. A, \textbf{456} (2000), 2465--2487.


\bibitem{RH_2006}
\newblock H.Rue and L.Held,
\newblock ``Gaussian Markov Random Fields'',
\newblock Chapman \& Hall, 2006.
\bibitem{R_1978}
\newblock D.Ruelle,
\newblock ``Thermodynamic Formalism'',
\newblock Addison-Wesley, London, 1978.
\bibitem{S_1994}
\newblock J.J.Sakurai,
\newblock ``Modern Quantum Mechanics'',
\newblock Addison-Wesley, Reading, Mass., 1994.





\bibitem{Sh_1995}
\newblock A.N.Shiryaev,
\newblock ``Probability'',  2nd ed.,
\newblock Springer, New York, 1995.

\bibitem{S_1995}
\newblock V.Spiridonov,
\newblock \emph{Universal superpositions of coherent states and self-similar potentials},
\newblock Phys. Rev. A, \textbf{52}--3 (1995), 1909--1935.
\bibitem{TP_1992}
\newblock E.M.Tory and D.K.Pickard,
\newblock \emph{Unilateral Gaussian fields},
\newblock Adv. Appl. Prob., \textbf{24}--1 (1992), 95--112.

\bibitem{V_1997}
\newblock I.Vladimirov,
\newblock  ``Quantized linear systems on integer lattices: frequency-based
approach'',
\newblock  Center for Applied Dynamical Systems and Environmental
Modeling, Deakin University, Geelong, Victoria,  Australia,
\newblock CADSEM Reports 96--032 (1996), 1--37; 96--033 (1996), 1--50.
\bibitem{VKD_2000}
\newblock I.Vladimirov, N.Kuznetsov and  P.Diamond,
\newblock \emph{Frequency measurability, algebras of quasiperiodic sets and spatial
discretizations of smooth dynamical systems},
\newblock Math. Comp. Simul., \textbf{52} (2000), 251–-272.

\end{thebibliography}
\end{document}